\newtheorem{lemma}{\scshape Lemma}
\newtheorem{proposition}{\scshape Proposition}
\newtheorem{definition}{\scshape Definition}
\newtheorem{remark}{\scshape Remark}
\newtheorem{theorem}{\scshape Theorem}
\title{Exponential Stability of a Degenerate Euler-Bernoulli Beam with Axial Force and Delayed Boundary Control}
\author[1]{Ben Bakary Junior SIRIKI \footnote{Corresponding author: \url{benbjsiriki@gmail.com}}}
\author[2]{Adama COULIBALY}
\affil[1]{Université Nangui Abrogoua, Abidjan, Côte D'Ivoire}
\affil[2]{Université Félix Houphouët-Boigny, Abidjan, Côte D'Ivoire}
\date{}
\providecommand{\keywords}[1]
{
	\small
	\textbf{Keywords:} Degenerate Euler-Bernoulli beam, axial force, time delay, exponential stability, Lyapunov functional
}
\begin{document}
	
\maketitle
	
	\begin{abstract}
This work investigates the global exponential stabilization of a degenerate Euler-Bernoulli beam subjected to a non uniform axial force and a delayed feedback control.	
First, we address the well-posedness of the system by constructing an appropriate energy space in weighted Sobolev settings. Using L\"umer-Phillips theorem, we prove that the linear operator associated with the problem generates a $\mathcal{C}_0$-semigroup of contractions.
Next, we establish the uniform exponential stability of the system. By constructing a novel Lyapunov functional incorporating weighted integral terms, we demonstrate that the energy of the system exponentially decays to zero and derive a precise decay rate estimate.
This work provides a significant extension to the stability theory for complex distributed parameter systems.
	\end{abstract}
	
	\keywords{Degenerate Euler-Bernoulli beam, axial force, time delay, exponential stability, Lyapunov functional}

\section{Introduction}
	The growing need for the accurate modeling, prediction, and control of complex physical systems has made stabilization theory a critical research direction within the scientific community. In this context, the stability analysis of Euler-Bernoulli beam models has received considerable attention from researchers in recent decades. These models are central to various engineering applications: spanning civil engineering \cite{du2022, fryba1996}, aeronautics \cite{balakrishnan1986, vinod2007, chakravarthy2010, kundu2017}, robotics \cite{kundu2017, luo1993}, and nanotechnology \cite{turner2001}. However, to the best of our knowledge, studies addressing this question remain scarce in a context that simultaneously integrates the degeneracy of the flexural rigidity, the axial force, and control laws incorporating a time delay.	
	
	This article addresses the stability analysis of an Euler-Bernoulli beam model with a degenerate flexural rigidity under axial force effects. The beam is characterized by a dynamic boundary condition at one end involving an operator $\mathcal{B}$, whereas the other end is controlled via a feedback law with a constant time-delay. The governing equations describing this model are presented below:
	\begin{equation} \label{eq:p1}
	\left\{
	\begin{array}{>{\displaystyle}l}
	 u_{tt} + \left( \sigma(x) u_{xx} \right)_{xx} - \left( q(x) u_x \right)_x = 0 , \quad \mathcolor{blue}{0 < x < 1}, \; t > 0, \\
	u(0,t) = \mathcal{B} u(0,t) = 0, \quad t > 0,  \\
	(\sigma (x) u_{xx})(1,t) + u_{xt}(1,t) = 0, \quad t > 0,  \\
	\left( \sigma (x) u_{xx} \right)_x(1,t) - q(1) u_x(1,t) = \kappa_1 u_t(1,t) + \kappa_2  u_t(1,t-\tau), \quad t > 0, \\
	u(x,0) = u_0(x), \, u_t(x,0) = u_1(x), \quad 0 \leqslant x \leqslant 1, \\
	u_t(1, t-\tau) = f_0(t-\tau), \quad 0 < t < \tau,
	\end{array}
	\right.
\end{equation}
where $u(x,t)$ denotes the transverse displacement of the beam at position $x$ and time $t$. The constants $\kappa_1 \geqslant 0$ and $\kappa_2 \neq 0$ represent the control gains. In addition, $\tau > 0$ represents the time delay, and $u_0$, $u_1$, $f_0$ are the initial data. The axial force function $q$ satisfies the following conditions:
\begin{equation} \label{eq:1}
	\left\{
	\begin{array}{>{\displaystyle}l}
	 q \in W^{1, \infty}(0,1), \\
	 0 < q_0 \leqslant q(x) \leqslant q_1, \quad 0 \leqslant x \leqslant 1,\\
	 \left| q'(x) \right| \leqslant q_2, \quad 0 \leqslant x \leqslant 1,
	\end{array}
	\right.
\end{equation}
and the flexural rigidity function $\sigma: \, [0, 1] \to \mathbb{R}_+$  is such that:
\begin{equation} \label{eq:2}
	\left\{
	\begin{array}{>{\displaystyle}l}
	 \sigma(0) = 0 \, \text{ and } \sigma(x) > 0, \; 0 < x \leqslant 1 \\
	 \iota_\sigma := \underset{x \in (0, 1]}{\sup} \frac{x |\sigma'(x)|}{\sigma(x)} < 2.
	\end{array}
	\right.
\end{equation}
and satisfies one of the following conditions:
\begin{align}
\sigma \in C[0,1] \cap C^1(0,1], \; \iota_\sigma \in (0, 1) \label{eq:3}\\
\sigma \in C^1[0,1], \; \iota_\sigma \in [1, 2). \label{eq:4}
\end{align}
The function $\sigma$ is defined as weakly degenerate, (WD) for short (respectively strongly degenerate, (SD) for short) when it satisfies conditions \eqref{eq:2} and \eqref{eq:3} (respectively \eqref{eq:2} and \eqref{eq:4}). In addition, due to the degeneracy of the problem at the boundary $x=0$, we emphasize that the operator $\mathcal{B}$ is defined as follows:
\begin{gather} \label{eq:6}
\mathcal{B} u(x,t) := \left\{
\begin{array}{l}
u_x(x,t) \quad \text{ if } \sigma \text{ is (WD),} \\ ~ \\
\left(\sigma(x) u_{xx}\right)(x,t) \quad \text{ if } \sigma \text{ is (SD).}
\end{array}
\right.
\end{gather}
Following the definition of the operator $\mathcal{B}$, we provide the physical interpretation of the boundary conditions $\eqref{eq:p1}_2$. Specifically, two distinct regimes are considered:
\begin{itemize}
\item in the weak degeneracy (WD) case, the beam  is \textit{clamped} at the boundary $x=0$. This implies that the beam can neither move vertically nor rotate at this point;
\item in the strong degeneracy (SD) case, when $u_{xx}(0,t) = 0$, the beam is \textit{hinged} (or \textit{simply supported}) at its end $x=0$.  In this configuration, the beam is free to rotate but cannot deflect vertically or support any bending moment.
\end{itemize}
Numerous studies have investigated the stability of Euler-Bernoulli beams. However, the vast majority of existing models neglect the effects of axial force (see for example \cite{han2016, kugi2005, shang2012, wang2005, teya2023, marc2017, guo2008, baysal2024, luo2012}) compared to those that take it into account (see \cite{benamara2025, ledkim2023}).
Furthermore, we emphasize that all models employed in the aforementioned references are non-degenerate; consequently, the structural wear of the beam over time or material defects are not accounted for, which can significantly alter the long-term stability of aging infrastructure.
The literature addressing such degenerate cases is remarkably scarce; among the few notable exceptions are \cite{camasta2024, camasta2025, salhi2025}. However, none of these works simultaneously account for the destabilizing effect of a non-uniform axial force combined with a delayed feedback.
 
	A crucial question is whether a robust analytical framework can be established to ensure the well-posedness of this degenerate Euler-Bernoulli beam problem under a non-uniform axial force and a delayed boundary control. Since even small time delays are known to induce instabilities, as highlighted in the papers \cite{datko1991, datko1993, han2016}, is it still possible to achieve stability for system \eqref{eq:p1}? 
This paper fills a gap in the literature by providing a unified stabilization analysis for a degenerate beam model that incorporates both variable axial force and boundary control delay.

	Inspired by anterior works \cite{camasta2025a, camasta2024, camasta2025, liao2024, alabau2006, alabau2017} and taking into account the degeneracy of the flexural rigidity, the presence of the axial force and the time delay in the boundary control, we first design an appropriate functional framework to answer these questions. Next, under the crucial condition that the control gains satisfy the following inequality:
\begin{gather}
\kappa_1 > \left|\kappa_2\right|, \label{eq:0}
\end{gather}
we demonstrate that problem \eqref{eq:p1} is well-posed using a semigroup theory approach. We then proceed to establish the exponential stability of system \eqref{eq:p1} using the Lyapunov method. Our approach relies on two key steps: the construction of a suitable Lyapunov functional and the derivation of refined estimates for the functional and its time derivative.

The rest of this paper is organized as follows. Section 2 reviews several preliminary results useful for the subsequent analysis. Next, in Section 3, we reformulate problem \eqref{eq:p1} into an abstract evolution problem in a suitable Hilbert state space. Then, under certain conditions, we demonstrate that the problem is well-posed in the sense of semigroups. In Section 4, we show that system \eqref{eq:p1} is exponentially stable using the Lyapunov method under the same conditions. Finally, Section 5 summarizes the main results of this article and discusses future research perspectives.
 
\section{Weighted functional spaces}

In order to study system \eqref{eq:p1}, let us introduce some Hilbert spaces with the related inner products (see \cite{camasta2025a, camasta2024, camasta2025, alabau2006, alabau2017, salhi2025}. Let
\begin{equation} \label{eq:7}
	V_\sigma^2(0, 1) = \left\{
	\begin{array}{>{\displaystyle}l}
	 \Big\{ u \in H^1(0,1): \, u' \text{ absolutely continuous  in } [0,1], \\
	 \qquad \qquad \qquad \quad \sqrt{\sigma} u'' \in L^2(0,1) \Big\}, \; \text{ if } \underline{\sigma \text{ is (WD)}}; \\
	 ~ \\
	 \Big\{ u \in H^1(0,1): \, u' \text{ locally absolutely continuous  in } (0,1], \\
	 \qquad \qquad \qquad \quad \sqrt{\sigma} u'' \in L^2(0,1) \Big\}, \; \text{ if } \underline{\sigma \text{ is (SD)}}. 
	\end{array}
	\right.
\end{equation}
On $V_\sigma^2(0, 1)$, we consider the inner product defined as: 
\begin{align}
\prec u, v \succ_\sigma \, := \int_{0}^{1} u(x) v(x) \, dx + \int_{0}^{1} u'(x) v'(x) \, dx + \int_{0}^{1} \sigma(x) u''(x) v''(x) \, dx, \label{eq:8a}
\end{align}
for all $u, \, v \in V_\sigma^2(0, 1)$, which induces the norm:
\begin{align}
\lVert u \rVert^2_{2, \sigma} & := \lVert u \rVert_{L^2(0,1)}^2 + \lVert u' \rVert_{L^2(0,1)}^2 + \lVert \sqrt{\sigma} u'' \rVert_{L^2(0,1)}^2. \label{eq:8}
\end{align}
In addition, we have the following functional space:
\begin{align}
H_\sigma^2(0,1) := \left\{u \in V_\sigma^2(0, 1): \; u(0) = 0\right\}. \label{eq:9}
\end{align}
$H_\sigma^2(0,1)$ is a linear subspace of $V_\sigma^2(0,1)$.  Endowed with the inner product defined in \eqref{eq:8a}, $V_\sigma^2(0, 1)$ and $H_\sigma^2(0,1)$ are Hilbert spaces. 

\begin{remark} \label{r1}
Let $u \in H_\sigma^2(0,1)$. As $u(0) = 0$, we have for every $x \in (0, 1]:$
\begin{gather}
\lVert u \rVert_{L^2(0,1)}^2 \leqslant \lVert u' \rVert_{L^2(0,1)}^2 \leqslant \frac{1}{q_0} \left\lVert \sqrt{q} u' \right\rVert_{L^2(0,1)}^2 \quad \text{ and } \label{eq:10}\\
  \left|u'(1)\right|^2 \leqslant 2 \left( \frac{1}{q_0} \left\lVert \sqrt{q} u' \right\rVert_{L^2(0,1)}^2 + \frac{\left\lVert \sqrt{\sigma} u'' \right\rVert_{L^2(0,1)}^2}{\sigma(1)(2-\iota_\sigma)} \right). \label{eq:11}
\end{gather}
\end{remark}
Next, we introduce the following functional space:
\begin{align}
Q_\sigma(0,1) := \left\{u \in H_\sigma^2(0,1): \, \sigma u'' \in H^2(0,1) \right\}. \label{eq:1.12}
\end{align}
By taking into account the boundary conditions $\eqref{eq:p1}_2$, we conclude this functional setting by the following ones:
\begin{align}
	H_{\sigma,0}^2(0,1) = \left\{
	\begin{array}{>{\displaystyle}l}
	 \left\{ u \in H^2_\sigma(0,1): \, u'(0) = 0  \right\}, \; \text{ if } \underline{\sigma \text{ is (WD)}},\\
	 H^2_\sigma(0,1), \; \text{ if } \underline{\sigma \text{ is (SD)}};
	\end{array}
	\right. \label{eq:13}\\
	Q_{\sigma,0}(0, 1) = \left\{
	\begin{array}{>{\displaystyle}l}
	 \left\{ u \in Q_\sigma(0,1): \, u'(0) = 0  \right\},\; \text{ if } \underline{\sigma \text{ is (WD)}},\\
	 \left\{ u \in Q_\sigma(0,1): \, (\sigma u'')(0) = 0 \right\}, \; \text{ if } \underline{\sigma \text{ is (SD)}}.
	\end{array}
	\right. \label{eq:14}
\end{align}

\section{Global existence of solution}\label{sec3}

This section is devoted to the well-posedness of system \eqref{eq:p1}. We first reformulate it into an augmented system. Next, we transform the resulting problem into an abstract Cauchy problem that we solve via a semigroup approach \cite{pazy1983, engel2000}.

	\subsection{Augmented Model}
We reformulate system \eqref{eq:p1} into an augmented system. To do this, we use the following result.

\begin{proposition} \label{p1}
Let $w$ be a function defined by:
\begin{align}
w(s,t) := u_t(1, t- s \tau), \quad 0 < s < 1, \;\; t > 0. \label{eq:15}
\end{align}
Then $w$ satisfies the following system:
\begin{equation} \label{eq:16}
	\left\{
	\begin{array}{>{\displaystyle}l}
	\tau w_t + w_s = 0, \quad 0 < s < 1, \; t > 0, \\
	w(0, t) = u_t(1, t), \quad t>0, \\
	w(1, t) = u_t(1, t-\tau), \quad t>0, \\
	w(s, 0) = f_0(-s \tau), \quad 0 < s < 1.
	\end{array}
	\right.
\end{equation}
\end{proposition}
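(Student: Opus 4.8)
The plan is to treat \eqref{eq:15} as a change of variables and to verify each line of \eqref{eq:16} by direct differentiation and evaluation. First I would fix $t>0$ and $s\in(0,1)$, set $\xi := t - s\tau$, and regard $w(s,t)=u_t(1,\xi)$ as the composition of the boundary trace map $\eta \mapsto u_t(1,\eta)$ with the affine argument $\xi$. Assuming enough regularity of the solution so that $\eta \mapsto u_t(1,\eta)$ is differentiable (the standing smoothness hypothesis under which the augmented model is derived), the chain rule, using $\partial_s\xi = -\tau$ and $\partial_t\xi = 1$, gives
\begin{align}
w_s(s,t) &= -\tau\, u_{tt}(1,\xi), \qquad w_t(s,t) = u_{tt}(1,\xi). \nonumber
\end{align}
Adding $w_s + \tau w_t$ then cancels the two terms and yields the transport equation $\eqref{eq:16}_1$.

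Next I would recover the boundary relations by specializing the argument of \eqref{eq:15}. Setting $s=0$ gives $\xi = t$, hence $w(0,t) = u_t(1,t)$, which is $\eqref{eq:16}_2$; setting $s=1$ gives $\xi = t - \tau$, hence $w(1,t) = u_t(1,t-\tau)$, which is $\eqref{eq:16}_3$. Both are immediate once the definition is in place and require no computation.

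The last step, and the only one requiring a little care, is matching the initial profile. Evaluating \eqref{eq:15} at $t=0$ gives $w(s,0) = u_t(1,-s\tau)$, with $-s\tau \in (-\tau,0)$ for $s\in(0,1)$. To identify this with the prescribed data I would invoke the history condition $\eqref{eq:p1}_6$: writing $-s\tau = t'-\tau$ with $t' = \tau(1-s) \in (0,\tau)$, the relation $u_t(1,t'-\tau)=f_0(t'-\tau)$ yields $u_t(1,-s\tau) = f_0(-s\tau)$, that is $w(s,0) = f_0(-s\tau)$, which is $\eqref{eq:16}_4$. The main (mild) obstacle throughout is simply bookkeeping the composed argument $\xi$ and the sign generated by $\partial_s\xi = -\tau$ against $\partial_t\xi = 1$; there is no analytic difficulty beyond the differentiability of the boundary trace $t\mapsto u_t(1,t)$, which is taken for granted at this formal reformulation stage and later legitimized by the regularity of the semigroup solutions.
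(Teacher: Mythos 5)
Your verification is correct and complete: the chain-rule computation for the transport equation, the evaluations at $s=0$ and $s=1$, and in particular the careful matching of $w(s,0)=u_t(1,-s\tau)$ with the history datum via $\eqref{eq:p1}_6$ are exactly the intended argument. The paper states Proposition \ref{p1} without proof, treating this classical change of variables (of Nicaise--Pirard type) as routine, so your write-up supplies precisely the omitted standard verification and matches the paper's approach.
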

Using Proposition \ref{p1}, system \eqref{eq:p1} is equivalent to:
\begin{equation} \label{eq:pb1}
	\left\{
	\begin{array}{>{\displaystyle}l}
	u_{tt} + \left( \sigma(x) u_{xx} \right)_{xx} - \left( q(x) u_x \right)_x = 0 , \quad 0 < x < 1, \; t > 0, \\
	\tau w_t + w_s = 0, \quad 0 < s < 1, \; t > 0, \\
	u(0,t) = \mathcal{B} u(0,t) = 0, \quad t > 0,  \\
	(\sigma(x) u_{xx})(1,t) + u_{xt}(1,t) = 0, \quad t > 0,  \\
	\left( \sigma (x) u_{xx} \right)_x(1,t) - q(1) u_x(1,t) = \kappa_1 u_t(1, t) + \kappa_2  w(1,t), \quad t > 0, \\
	w(0, t) = u_t(1, t), \quad t>0, \\
	w(1, t) = u_t(1, t-\tau), \quad t>0,\\
	u(x,0) = u_0(x), \, u_t(x,0) = u_1(x), \quad 0 \leqslant x \leqslant 1, \\
	w(s, 0) = f_0(-s \tau), \quad 0 < s < 1.
	\end{array}
	\right.
\end{equation}

	\subsection{Well-posedness of the problem}

	In this subsection, we formulate the abstract Cauchy problem associated with system \eqref{eq:pb1} in a adequate Hilbert state space. Next, by using Hille-Yosida Theorem (see \cite{haim1983}), we prove that system \eqref{eq:p1} is well-posed.
	
	Let $\left(\mathcal{H}, \prec \cdot, \cdot \succ\right)$ be a Hilbert space defined by:
\begin{gather}
\mathcal{H} := H^2_{\sigma,0}(0,1) \times L^2(0,1) \times L^2(0,1)  \label{eq:17} \\
\begin{gathered}
\prec Y_1, Y_2 \succ := \int_0^1 v_1(x) \, v_2(x) \, dx + \int_0^1 q(x) \, u_{1,x}(x) \, u_{2,x}(x) \, dx \\
+ \int_0^1 \sigma(x) \, u_{1,xx}(x) \, u_{2,xx}(x) \, dx + \gamma \tau \int_0^1 w_1(s) \, w_2(s) \, ds.  \label{eq:18}
\end{gathered}
\end{gather}
where $Y_i = (u_i, \, v_i, \, w_i), \, i = 1, 2$, the parameters $\kappa_1, \, \kappa_2$ satisfy \eqref{eq:0} and a positive constant $\gamma$ such that:
\begin{gather}  \label{eq:19}
\left|\kappa_2\right| < \gamma < 2 \kappa_1 - \left|\kappa_2\right|.
\end{gather}

	Let $\mathbb{A}$ be an operator with domain $\mathcal{D}(\mathbb{A}) \subset \mathcal{H}$ defined as follows:
\begin{gather}
\begin{gathered}
\mathcal{D}(\mathbb{A}) := 
\left\{
\begin{array}{l}
(u, v, w) \in Q_{\sigma,0}(0,1) \times H^2_{\sigma,0}(0,1) \times H^1(0,1) \bigg| \, w(0) = v(1),\\
(\sigma u'')(1) + v'(1) = 0, \, (\sigma u'')'(1) - q(1)u'(1) = \kappa_1 v(1) + \kappa_2 w(1)
\end{array}
\right\}
\end{gathered} \label{eq:20}  \\
\mathbb{A} \begin{pmatrix}u \\ v \\ w\end{pmatrix} 
:= \begin{pmatrix}v \\ (q u')' - (\sigma u'')'' \\ - \tau^{-1} w' \end{pmatrix}. \label{eq:21}
\end{gather}
Stating the vector $Y(t) := (u(\cdot, t), \, v(\cdot, t), \, w(\cdot, t)) \in \mathcal{D}(\mathbb{A})$ for all $t \geqslant 0$, we recast \eqref{eq:pb1} as an abstract evolution problem in $\mathcal{H}$:
\begin{equation}   \label{eq:22}
	\left\{
	\begin{array}{>{\displaystyle}l}
	 \dfrac{dY(t)}{dt} = \mathbb{A} Y(t), \; t > 0 \\
	 Y(0) = Y_0 = \left(u_0, \, u_1, \, f_0(-s\tau)\right)
	\end{array}
	\right.
\end{equation}~

We now state the first main result of this paper. 

\begin{theorem} \label{t1}
Assume that the function $\sigma$ is (WD) or (SD). Under assumptions \eqref{eq:0} and \eqref{eq:19}, the linear operator $\mathbb{A}$ defined by \eqref{eq:20}-\eqref{eq:21} is densely defined in $\mathcal{H}$ and generates a $C_0-$semigroup of contractions. In particular:
\begin{itemize}
\item if $Y_0 \in \mathcal{D}\left(\mathbb{A}\right)$, then system \eqref{eq:22} admits a unique solution $Y \in C\left([0, \infty); \mathcal{D}\left(\mathbb{A}\right)\right) \cap C^1\left([0, \infty); \mathcal{H}\right)$. Precisely, we have:
\begin{gather}
\begin{split}  \label{eq:23}
u \in C^2\left([0, \infty); \, L^2(0,1) \right) \cap C^1\left([0, \infty); \, H^2_{\sigma,0}(0,1) \right) \cap C\left([0, \infty); \, Q_{\sigma,0}(0,1) \right);
\end{split}
\end{gather}
\item if $Y_0 \in \mathcal{H}$, then system \eqref{eq:22} admits a unique solution $Y \in C\left([0, \infty); \mathcal{H}\right)$. Specifically, we have:
\begin{gather}
\begin{split}  \label{eq:24}
u \in C^1\left([0, \infty); \, L^2(0,1) \right) \cap C\left([0, \infty); \, H^2_{\sigma,0}(0,1) \right).
\end{split}
\end{gather}
\end{itemize}
\end{theorem}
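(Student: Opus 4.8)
The plan is to verify the hypotheses of the Lümer–Phillips generation theorem for contraction semigroups, namely that $\mathbb{A}$ is densely defined, dissipative, and maximal in the sense that $\lambda I-\mathbb{A}$ is surjective onto $\mathcal{H}$ for some (hence every) $\lambda>0$. Density of $\mathcal{D}(\mathbb{A})$ in $\mathcal{H}$ is routine: the three components of $\mathcal{D}(\mathbb{A})$ range over $Q_{\sigma,0}(0,1)$, $H^2_{\sigma,0}(0,1)$ and $H^1(0,1)$, each dense in the corresponding factor $H^2_{\sigma,0}(0,1)$, $L^2(0,1)$, $L^2(0,1)$, and the coupling relations in \eqref{eq:20} are finitely many linear trace constraints that can be matched by a standard cut-off/approximation argument away from the degeneracy at $x=0$. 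Once generation is established, the two regularity assertions \eqref{eq:23}–\eqref{eq:24} are exactly the classical consequences of the semigroup for initial data in $\mathcal{D}(\mathbb{A})$ and in $\mathcal{H}$, respectively (see \cite{pazy1983}).

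For dissipativity I would compute the pairing $\prec \mathbb{A}Y,Y\succ$ from \eqref{eq:18} with $Y=(u,v,w)$ and $\mathbb{A}Y$ given by \eqref{eq:21}. Integrating the elastic term $\int_0^1\big[(qu')'-(\sigma u'')''\big]v\,dx$ by parts (once for the $q$ part, twice for the $\sigma$ part) produces interior integrals $-\int_0^1 qu'v'\,dx$ and $-\int_0^1\sigma u''v''\,dx$ that cancel exactly against the second and third terms of \eqref{eq:18}, leaving only boundary contributions. At $x=0$ these vanish: $v(0)=0$ kills the $q(0)u'(0)v(0)$ and $(\sigma u'')'(0)v(0)$ terms, while $\sigma(0)=0$ together with $v'(0)=0$ in the (WD) case, or $(\sigma u'')(0)=0$ from \eqref{eq:14} in the (SD) case, kills the remaining term $(\sigma u'')(0)v'(0)$. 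At $x=1$, using $u''(1)=0$ and the feedback relation $(\sigma u'')'(1)-q(1)u'(1)=\kappa_1 v(1)+\kappa_2 w(1)$ from \eqref{eq:20}, the boundary contribution reduces to $-\kappa_1 v(1)^2-\kappa_2 v(1)w(1)$. Finally the transport term gives $-\gamma\int_0^1 w'w\,dx=\tfrac{\gamma}{2}\big(v(1)^2-w(1)^2\big)$ after using $w(0)=v(1)$. Collecting everything,
\[
\prec \mathbb{A}Y,Y\succ=\Big(\tfrac{\gamma}{2}-\kappa_1\Big)v(1)^2-\kappa_2\,v(1)w(1)-\tfrac{\gamma}{2}\,w(1)^2,
\]
a quadratic form whose matrix is negative semidefinite precisely when $\gamma\le 2\kappa_1$ and $\gamma^2-2\kappa_1\gamma+\kappa_2^2\le 0$; both hold on the admissible interval \eqref{eq:19} thanks to $\kappa_1>|\kappa_2|$ in \eqref{eq:0}, so $\mathbb{A}$ is dissipative.

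The main obstacle is surjectivity. Fixing $\lambda>0$ and $F=(f_1,f_2,f_3)\in\mathcal{H}$, I would solve $(\lambda I-\mathbb{A})Y=F$ by first setting $v=\lambda u-f_1$, then solving the first–order ODE $\tau^{-1}w'+\lambda w=f_3$ in $s$ with the initial condition $w(0)=v(1)$ explicitly, which expresses $w$ — and in particular $w(1)$ — as an affine functional of the number $u(1)$ and of the data. Substituting back reduces the whole system to the degenerate fourth–order elliptic problem $(\sigma u'')''-(qu')'+\lambda^2 u=f_2+\lambda f_1$ on $(0,1)$, subject to the clamped conditions at $x=0$, to $u''(1)=0$, and to a Robin–type condition at $x=1$ carrying the $w(1)$ term. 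I would cast this variationally on $H^2_{\sigma,0}(0,1)$: the principal part of the bilinear form is exactly the weighted energy $\int_0^1\sigma u''\phi''+\int_0^1 qu'\phi'$ appearing in \eqref{eq:18}, while coercivity of the remaining lower–order and boundary terms is secured by the weighted Hardy and trace inequalities of Remark \ref{r1}, i.e.\ \eqref{eq:10}–\eqref{eq:11}, which bound $\|u\|_{L^2}$, $\|u'\|_{L^2}$ and $|u'(1)|$ by the energy norm. Lax–Milgram then yields a unique weak solution $u$.

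It remains to upgrade regularity and recover the domain. The delicate point, again caused by the degeneracy at $x=0$, is to show that the weak solution genuinely satisfies $\sigma u''\in H^2(0,1)$ and the correct degenerate trace condition at the origin ($u'(0)=0$ in (WD), $(\sigma u'')(0)=0$ in (SD)), so that $u\in Q_{\sigma,0}(0,1)$; this is where the constraint $\iota_\sigma<2$ and the dichotomy \eqref{eq:3}–\eqref{eq:4} are used, through interior elliptic regularity on $(\epsilon,1]$ followed by a controlled passage to the limit $\epsilon\to 0^+$. Back–substitution then gives $v=\lambda u-f_1\in H^2_{\sigma,0}(0,1)$ and $w\in H^1(0,1)$ satisfying the coupling relations, so $Y\in\mathcal{D}(\mathbb{A})$ and $\lambda I-\mathbb{A}$ is onto. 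With density, dissipativity and maximality in hand, Lümer–Phillips delivers the contraction $C_0$–semigroup, and the stated solution regularity follows.
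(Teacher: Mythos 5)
Your proposal is correct and follows essentially the same route as the paper: dissipativity via integration by parts, using $w(0)=v(1)$ and the feedback relation in \eqref{eq:20}, yielding exactly the boundary quadratic form $-(\kappa_1-\tfrac{\gamma}{2})w^2(0)-\kappa_2 w(0)w(1)-\tfrac{\gamma}{2}w^2(1)$ of \eqref{eq:29} (which the paper bounds by Young's inequality under \eqref{eq:0} and \eqref{eq:19}, equivalent to your semidefiniteness check), and maximality by explicitly integrating the transport equation to express $w$ through $u(1)$ and the data, reducing to the degenerate fourth-order problem \eqref{eq:33a}, applying Lax--Milgram on $H^2_{\sigma,0}(0,1)$, and then recovering $u\in Q_{\sigma,0}(0,1)$ together with the boundary conditions at $x=1$, before concluding via L\"umer--Phillips and the standard semigroup regularity theory. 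The only inessential deviations are that the paper takes $\lambda=1$ rather than general $\lambda>0$, obtains density of $\mathcal{D}(\mathbb{A})$ automatically from $m$-dissipativity in a Hilbert space instead of your direct cut-off argument, and recovers $\sigma u''\in H^2(0,1)$ by identifying the equation distributionally against $\varphi\in\mathcal{D}(0,1)$ rather than by interior regularity and a limit as $\epsilon\to 0^+$.
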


To demonstrate this theorem, we need the following lemma.
\begin{lemma} \label{l1}
Assume that the function $\sigma$ is (WD) or (SD). The operator $\mathbb{A}$ defined by \eqref{eq:20}-\eqref{eq:21} is $m-$dissipative. 
\end{lemma}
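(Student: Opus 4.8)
The plan is to establish the two defining properties of an $m$-dissipative operator separately: first that $\mathbb{A}$ is dissipative, i.e. $\prec \mathbb{A} Y, Y \succ \, \leqslant 0$ for every $Y = (u,v,w) \in \mathcal{D}(\mathbb{A})$, and then that $\mathbb{A}$ is maximal, i.e. $I - \mathbb{A}$ is surjective onto $\mathcal{H}$. For dissipativity, I would compute $\prec \mathbb{A} Y, Y \succ$ directly from \eqref{eq:18} and \eqref{eq:21}. The key is a double integration by parts on the term $\int_0^1 \left[(qu')' - (\sigma u'')''\right] v \, dx$. The boundary contributions at $x = 0$ all vanish: the clamping $v(0) = 0$ kills the first-order terms, while the degeneracy $\sigma(0) = 0$ together with the membership condition in $Q_{\sigma,0}(0,1)$ (namely $u'(0) = 0$ in the (WD) case or $(\sigma u'')(0) = 0$ in the (SD) case) kills the remaining one. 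After cancelling the interior integrals $\int_0^1 q u' v'$ and $\int_0^1 \sigma u'' v''$ against the corresponding terms in the inner product, and using $u''(1) = 0$, one is left only with boundary contributions at $x = 1$ and the term coming from $w$.

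Substituting the feedback condition $(\sigma u'')'(1) - q(1) u'(1) = \kappa_1 v(1) + \kappa_2 w(1)$, and evaluating $-\gamma \int_0^1 w' w \, dx = \frac{\gamma}{2}\left(w(0)^2 - w(1)^2\right)$ with the transmission condition $w(0) = v(1)$, I expect to arrive at
\begin{equation*}
\prec \mathbb{A} Y, Y \succ \, = \left(\frac{\gamma}{2} - \kappa_1\right) v(1)^2 - \kappa_2 v(1) w(1) - \frac{\gamma}{2} w(1)^2.
\end{equation*}
This quadratic form in $(v(1), w(1))$ is negative semidefinite precisely when $\gamma \leqslant 2\kappa_1$ and $\gamma^2 - 2\kappa_1 \gamma + \kappa_2^2 \leqslant 0$; a short check (the quadratic in $\gamma$ is convex and negative at both endpoints of the admissible interval) shows both are guaranteed by the range \eqref{eq:19} for $\gamma$ together with the gain condition \eqref{eq:0}, so dissipativity follows.

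For maximality, given $F = (f_1, f_2, f_3) \in \mathcal{H}$ I would solve $(I - \mathbb{A}) Y = F$ componentwise. The first line forces $v = u - f_1$, and the third line becomes the linear ODE $w' + \tau w = \tau f_3$ which, once the transmission value $w(0) = v(1) = u(1) - f_1(1)$ is known, is solved explicitly; in particular $w(1)$ is an affine function of $u(1)$ plus a term depending only on the data. Substituting $v$ into the second line reduces the whole system to a single degenerate fourth-order elliptic problem for $u$,
\begin{equation*}
(\sigma u'')'' - (q u')' + u = f_1 + f_2,
\end{equation*}
subject to $u(0) = 0$, the degenerate condition at $0$, $u''(1) = 0$, and the feedback condition at $1$ rewritten (via the expressions for $v(1)$ and $w(1)$) as $(\sigma u'')'(1) - q(1) u'(1) = (\kappa_1 + \kappa_2 e^{-\tau}) u(1) + \text{(data)}$. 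I would attack this by a variational formulation on $H^2_{\sigma,0}(0,1)$ and the Lax--Milgram theorem: the associated bilinear form
\begin{equation*}
a(u,\phi) = \int_0^1 \sigma u'' \phi'' \, dx + \int_0^1 q u' \phi' \, dx + \int_0^1 u \phi \, dx + (\kappa_1 + \kappa_2 e^{-\tau}) u(1)\phi(1)
\end{equation*}
is continuous (using $u(0)=0$ and the trace estimates of Remark \ref{r1}) and, since $\kappa_1 + \kappa_2 e^{-\tau} > 0$ by \eqref{eq:0} and the coefficient bounds \eqref{eq:1}, coercive on $H^2_{\sigma,0}(0,1)$ via \eqref{eq:10}; the right-hand side defines a bounded linear functional. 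Lax--Milgram then yields a unique weak solution $u$, from which $v$ and $w$ are recovered and shown to satisfy $w(0)=v(1)$ and the boundary conditions by construction.

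The step I expect to be the genuine obstacle is twofold, and both difficulties stem from the degeneracy at $x=0$. First, justifying rigorously that every boundary term at the degenerate endpoint vanishes in the dissipativity computation requires careful use of the weighted spaces and of conditions \eqref{eq:2}--\eqref{eq:4}: in the (SD) case $u''$ need not be bounded near $0$, so one must argue through $\sqrt{\sigma}\,u'' \in L^2(0,1)$ and the defining condition $(\sigma u'')(0)=0$ rather than naive pointwise evaluation. Second, in the maximality step, upgrading the Lax--Milgram weak solution to a genuine element of $Q_{\sigma,0}(0,1)$ — i.e. establishing the elliptic regularity $\sigma u'' \in H^2(0,1)$ and the pointwise meaning of $(\sigma u'')'(1)$ in this weighted, degenerate setting — is delicate and is where the structural hypotheses \eqref{eq:3}--\eqref{eq:4} (in particular the bound $\iota_\sigma < 2$) must be invoked. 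The dissipativity algebra and the Lax--Milgram verification, by contrast, are routine once this functional framework is secured.
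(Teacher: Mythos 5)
Your proposal is correct and follows essentially the same route as the paper: the same integration-by-parts computation (with the degenerate boundary terms at $x=0$ killed exactly as you describe) yielding $\prec \mathbb{A}Y, Y\succ \, = -\left(\kappa_1 - \frac{\gamma}{2}\right)v(1)^2 - \kappa_2 v(1)w(1) - \frac{\gamma}{2}w(1)^2$, which the paper bounds via Young's inequality where you instead verify negative semidefiniteness of the quadratic form — equivalent under \eqref{eq:0} and \eqref{eq:19} — and the same maximality argument reducing $(I-\mathbb{A})Y = F$ to the degenerate fourth-order problem \eqref{eq:33a} solved by Lax--Milgram with the identical bilinear form. As a minor point, your coefficient $\kappa_1 + \kappa_2 e^{-\tau}$ is the correct one; the paper's \eqref{eq:38} writes $e^{-r}$, which is evidently a typo for $e^{-\tau}$.
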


\begin{proof} ~\newline
\underline{$\mathbb{A}$ \itshape is dissipative}. Let $Y = (u, v, w) \in \mathcal{D}(\mathbb{A})$. We have:
\begin{align}
\begin{split} \label{eq:25}
\prec \mathbb{A} Y, Y\succ & = \int_0^1 \big( (qu')'-(\sigma u'')'' \big)(x) v(x) \, dx + \int_0^1 q(x)u'(x)v'(x) \, dx  \\
& \quad + \int_0^1 \sigma(x)v''(x)u''(x) \, dx - \gamma \int_0^1 w(s) w'(s) \, ds.
\end{split}
\end{align}
Integrating it by parts and using the boundary conditions $\eqref{eq:pb1}_3-\eqref{eq:pb1}_5$, we obtain:
\begin{align}
\prec \mathbb{A} Y, Y\succ & = -\left(\kappa_1 - \frac{\gamma}{2} \right) w^2(0) - \frac{\gamma}{2} w^2(1) - v'(1)^2 - \kappa_2 w(0) w(1). \label{eq:29}
\end{align}
Applying Young's inequality, \eqref{eq:29} leads to the following result:
\begin{gather} \label{eq:31}
	\prec \mathbb{A} Y, Y\succ \leqslant -\left(\kappa_1 - \frac{\gamma + \left|\kappa_2\right|}{2}\right) w^2(0) - \frac{\gamma - \left|\kappa_2\right|}{2} w^2(1) - v'(1)^2 \leqslant 0,
\end{gather}
due to inequalities \eqref{eq:0} and \eqref{eq:19}.

~\newline
\underline{$\mathbb{A}$ \itshape is maximal}. It is sufficient to prove that the operator $I-\mathbb{A}$ is surjective. \newline
Given $F = (f, g, h) \in \mathcal{H}$, we look for an element $Y = (u, v, w) \in \mathcal{D}\left( \mathbb{A} \right)$ such that $\left( I-\mathbb{A} \right) Y = F$. This consists of solving the following system:
\begin{gather} \label{eq:32}
\left\{
\begin{array}{l}
u - v = f \\
(\sigma u'')'' - (qu')' + u = f + g \\
w + \tau^{-1} w' = h.
\end{array}
\right.
\end{gather}
Solving $\eqref{eq:32}_1$ and $\eqref{eq:32}_3$ yields:
\begin{gather} \label{eq:33}
\left\{
\begin{array}{>\displaystyle l}
v = u - f \\
w(s) = \left(u(1) - f(1)\right)e^{-\tau s} + \tau \int_0^s e^{(r-s)\tau} h(r) dr, \quad s \in (0, 1).
\end{array}
\right.
\end{gather}
Consequently, we remark that the solution $Y$ is entirely determined by the knowledge of $u \in Q_{\sigma,0}(0,1)$. The latter solves:
\begin{gather} \label{eq:33a}
\left\{
\begin{array}{>\displaystyle l}
(\sigma u'')'' - (qu')' + u = f + g \\
(\sigma u'')(1) + u'(1) = f'(1) \\
(\sigma u'')'(1) - q(1)u'(1) = \Lambda_1 u(1) - \Lambda_2(f,h),
\end{array}
\right.
\end{gather}
where the constants $\Lambda_1$, $\Lambda_2(f,h)$ are given by:
\begin{gather}
\begin{split} \label{eq:38}
\Lambda_1 := \kappa_1 + \kappa_2 e^{-r}, \quad \Lambda_2(f,h) := \Lambda_1 f(1) - \kappa_2 \tau \int_0^1 e^{(r-1)\tau} h(r) \, dr.
\end{split}
\end{gather}
We adopt a variational approach to prove the existence of $u$. \newline
Let $\varphi \in H^2_{\sigma,0}(0,1)$. Multiplying $\eqref{eq:33a}_1$ by the test function $\varphi$ and integrating it over $[0,1]$, we get:
\begin{align}
	\int_0^1 \Big((\sigma u'')''(x) - (qu')'(x) + u(x)\Big) \varphi(x) \, dx = \int_0^1 \Big(f(x) + g(x)\Big) \varphi(x) \, dx. \label{eq:34}
\end{align}
Performing integrations by parts yields:
\begin{align} \label{eq:39}
\mathbb{B}_1(u, \varphi) = \mathbb{L}_1(\varphi), \quad \forall \varphi \in H^2_{\sigma,0}(0,1)
\end{align}
where the applications $\mathbb{B}_1$ and $\mathbb{L}_1$ are defined as follows:
\begin{align}
\mathbb{L}_1(\varphi) &:= \int_0^1 \Big(f(x) + g(x)\Big) \varphi(x) \, dx + \Lambda_2(f,h) \varphi(1) + f'(1) \varphi'(1), \label{eq:40}\\
\begin{split} \label{eq:41}
\mathbb{B}_1(u, \varphi) &:= \int_0^1 \Big(\sigma(x) u''(x) \varphi''(x) + q(x)u'(x)\varphi'(x) + u(x)\varphi(x)\Big) \, dx \\
& \quad + \Lambda_1 u(1) \varphi(1) + u'(1) \varphi'(1). 
\end{split}
\end{align}
It is straightforward to verify that the bilinear form $\mathbb{B}_1$ is continuous and coercive, and the linear form $\mathbb{L}_1$ is continuous. According to Lax-Milgram theorem, the variational problem \eqref{eq:39} admits a unique solution $u \in H_{\sigma,0}^2(0,1)$. Since $v = u - f$, it follows that $v \in H_{\sigma,0}^2(0,1)$.

Conversely, let us prove that $u \in H_{\sigma,0}^2(0,1)$ solves  $\eqref{eq:33a}$. First, \eqref{eq:39} holds for any $\varphi \in \mathcal{D}(0,1) \subset H_{\sigma,0}^2(0,1)$. Then, for any $\varphi \in \mathcal{D}(0,1)$, we have:
\begin{align} \label{eq:42a}
\int_0^1 \sigma(x) u''(x) \varphi''(x) \, dx + \int_0^1 q(x)u'(x)\varphi'(x) \, dx = \int_0^1 \Big(f(x) + g(x)- u(x)\Big) \varphi(x) \, dx. 
\end{align}
By integrating the first two integral terms of the preceding expression by parts, it follows that 
$\displaystyle (\sigma u'')'' - (qu')' + u = f+g $ a.e. in $(0,1)$. Since $\mathcal{D}(0,1)$ is dense in $L^2(0,1)$, then $(\sigma u'')'' - (qu')' + u = f+g$ on $L^2(0,1)$. Consequently, $\sigma u'' \in H^2(0,1)$. Thus \textcolor{blue}{$u \in Q_{\sigma}(0,1)$} and solves $\eqref{eq:33a}_1$. \newline
On the other hand, performing integrations by parts and 
using the relation \eqref{eq:42a}, we deduce from \eqref{eq:39} that: 
\begin{gather} 
\Big( q(1)u'(1) - (\sigma u'')'(1) + \Lambda_1 u(1) - \Lambda_2(f,h) \Big) \varphi(1) + \left( (\sigma u'')(1) + u'(1)- f'(1) \right) \varphi'(1) \nonumber \\
- \sigma(0)u''(0) \varphi'(0) \label{eq:43a}
= 0,
\end{gather}
for all $\varphi \in H_{\sigma,0}^2(0,1)$. It follows that:
\begin{gather} 
\label{eq:43b}
\left\{
\begin{array}{>\displaystyle l}
\big( \sigma u'' \big)(0) \varphi'(0) = 0 \\
\left( (\sigma u'')(1) + u'(1)- f'(1) \right) \varphi'(1) = 0 \\
\left( -(\sigma u'')'(1) + q(1)u'(1) + \Lambda_1 u(1) - \Lambda_2(f,h) \right) \varphi(1) = 0.
\end{array}
\right.
\end{gather}
As $\varphi$ is arbitrary, we deduce from relations $\eqref{eq:43b}_2$ and $\eqref{eq:43b}_3$ that $u$ satisfies the boundary conditions $\eqref{eq:33a}_2$ and $\eqref{eq:33a}_3$.
If $\sigma$ is (WD), then $u'(0) = \varphi'(0) = 0$ and the relation $\eqref{eq:43b}_1$ holds. Otherwise, if $\sigma$ is (SD), then $(\sigma u)''(0) = 0$. Finally, $u \in Q_{\sigma, 0}(0,1)$ and solves \eqref{eq:33a}.
We conclude that $I-\mathbb{A}$ is surjective.
\end{proof}

\begin{proof}[Proof of Theorem \ref{t1}] By Lemma \ref{l1}, the operator $\mathbb{A}$ is $m-$dissipative. Then it is densely defined in $\mathcal{H}$. According to L\"umer-Phillips Theorem,  $\mathbb{A}$ generates a $C_0-$semigroup of contractions. We obtain the desired result by using Hille-Yosida theorem. 
\end{proof}

	This section is devoted to the exponential stability of system \eqref{eq:p1}. To do so, inspired by the works presented in \cite{han2016, camasta2025, camasta2024, alabau2006, alabau2017, serge2006, serge2016, liao2024, salhi2025}, our approach is based on the Lyapunov method. \newline

In the following, we suppose that conditions \eqref{eq:0} and \eqref{eq:19} hold. To begin, we recall an important result by Kormonik \cite{komornik1994}.
	
\begin{theorem} \label{kormonik}
Suppose that $E: [0, +\infty) \to [0, +\infty)$ is a non-increasing function and that there exists a constant $M>0$ such that
\begin{gather}
\int_{t}^{\infty} E(s) \, ds \leqslant M E(t), \quad \forall t \in [0, +\infty).
\end{gather}
Then we have
\begin{gather}
E(t) \leqslant E(0) \,  e^{1 - \frac{t}{M}}, \quad \forall t \in [M, +\infty).
\end{gather}
\end{theorem}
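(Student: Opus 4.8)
The plan is to reduce the stated integral inequality to a first-order differential inequality for the tail integral of $E$, solve it with an integrating factor, and then recover a genuine pointwise bound on $E$ using only the monotonicity hypothesis. First I would introduce the auxiliary function
\[
F(t) := \int_t^{\infty} E(s)\, ds, \qquad t \geq 0 .
\]
Since $E$ is non-increasing and non-negative it is bounded by $E(0)$ on all of $[0,\infty)$, and evaluating the hypothesis at $t=0$ gives $F(0) \leq M E(0) < \infty$, so $F$ is finite and well defined; moreover $F$ is globally Lipschitz (with constant $E(0)$), hence absolutely continuous, and by the fundamental theorem of calculus for absolutely continuous functions one has $F'(t) = -E(t)$ for almost every $t \geq 0$.

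Next I would rewrite the hypothesis $\int_t^{\infty} E(s)\, ds \leq M E(t)$ as $F(t) \leq -M F'(t)$, that is,
\[
F'(t) + \frac{1}{M} F(t) \leq 0 \qquad \text{for a.e. } t \geq 0 .
\]
Multiplying by the integrating factor $e^{t/M}$ yields $\frac{d}{dt}\big( e^{t/M} F(t) \big) \leq 0$ almost everywhere. Because $t \mapsto e^{t/M} F(t)$ is absolutely continuous, an almost-everywhere non-positive derivative forces it to be non-increasing on $[0,\infty)$, whence
\[
e^{t/M} F(t) \leq F(0), \qquad \text{i.e.} \qquad F(t) \leq F(0)\, e^{-t/M}, \quad t \geq 0 .
\]

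Finally I would convert this decay of $F$ into decay of $E$ by exploiting monotonicity. For $t \geq M$, since $E$ is non-increasing we have $E(s) \geq E(t)$ for every $s \in [t-M, t]$, so
\[
M E(t) \leq \int_{t-M}^{t} E(s)\, ds \leq \int_{t-M}^{\infty} E(s)\, ds = F(t-M) \leq F(0)\, e^{-(t-M)/M} .
\]
Combining this with $F(0) \leq M E(0)$ gives $M E(t) \leq M E(0)\, e^{\,1 - t/M}$, that is $E(t) \leq e^{1 - t/M} E(0)$ for all $t \geq M$, which is exactly the claimed estimate.

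The main obstacle, and essentially the only delicate point, is the measure-theoretic justification underlying the first two steps: because $E$ is merely assumed non-increasing (hence measurable and bounded, but possibly discontinuous), the identity $F' = -E$ and the resulting monotonicity of $e^{t/M} F(t)$ hold only almost everywhere a priori. The key observation that makes the Gronwall-type conclusion rigorous is that $F$, being the indefinite integral of a bounded function, is absolutely continuous, so an almost-everywhere sign condition on its derivative propagates to a genuine monotonicity statement without requiring any continuity of $E$ itself.
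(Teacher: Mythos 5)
Your proof is correct, and it is essentially the same argument as in the paper's source for this result: the paper states the theorem without proof, citing Komornik \cite{komornik1994}, whose classical proof likewise introduces the tail integral $F(t)=\int_t^\infty E(s)\,ds$, derives $\bigl(e^{t/M}F(t)\bigr)'\leqslant 0$ almost everywhere, and converts the resulting decay of $F$ into decay of $E$ via the monotonicity bound $ME(t)\leqslant\int_{t-M}^{t}E(s)\,ds$. Your closing remark correctly identifies the one delicate point — $F$ is absolutely continuous as the indefinite integral of a bounded measurable function, so the a.e.\ differential inequality integrates to genuine monotonicity even though $E$ may be discontinuous — and nothing is missing.
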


	\subsection{Energy estimates}

Consistent with well established methodologies (see e.g. \cite{han2016, baysal2024, camasta2025, camasta2024, liao2024, alabau2006, alabau2017, akil2021, salhi2025}, the Lyapunov functional is derived from the energy of the system. In this subsection, we establish the energy of system \eqref{eq:p1} and prove its decay.
	
\begin{definition} \label{d1}
Assume that the function $\sigma$ is (WD) or (SD). \textcolor{blue}{The energy $E(t)$} of the delayed system \eqref{eq:p1}, associated with its solution $u$, is defined by:
\begin{align}  \label{eq:56}
\begin{split}
E(t)  := & \frac{1}{2} \left[ \int_0^1 \Big( u_t^2(x,t) + \sigma(x) u_{xx}^2(x,t) + q(x) u_x^2(x,t) \Big) \, dx \right. \\
& \qquad \left. + \gamma \tau \int_0^1 u_t^2(1,t - \tau s) \, ds \right], \quad \forall t \geqslant 0.
\end{split}
\end{align}
\end{definition}

\begin{proposition} \label{p2}
Assume that the function $\sigma$ is (WD) or (SD). Let $u$ be a regular solution of problem \eqref{eq:p1}. The energy $E(t)$ of system \eqref{eq:p1}, defined in \eqref{eq:56}, is dissipative. More precisely, we have:
\begin{align} \label{eq:57}
	\frac{d}{dt}E(t) \leqslant - C_{\kappa_1, \kappa_2}^\gamma \Big(u_t^2(1,t) + u_t^2(1,t-\tau) + u_{xt}^2(1,t) \Big), \quad \forall t \geqslant 0,
\end{align}
where the positive constant $C_{\kappa_1, \kappa_2}^\gamma$ is given by:
\begin{gather} \label{eq:58}
C_{\kappa_1, \kappa_2}^\gamma := \min \Big\{ 1; \, \frac{\gamma - \left|\kappa_2 \right|}{2}; \, \kappa_1 - \frac{\gamma + \left|\kappa_2 \right|}{2}\Big\}.
\end{gather}
\end{proposition}

\begin{proof}
We multiply equation $\eqref{eq:pb1}_1$ by $u_t$ and we integrate it over $(0,1)$ by parts. After using the boundary conditions $\eqref{eq:p1}_2$ and $\eqref{eq:p1}_3$, we obtain:
\begin{align} \label{eq:63}
\begin{gathered}
\frac{d}{dt} \left[ \frac{1}{2} \int_{0}^{1} \Big( u_t^2(x,t) + \sigma(x) u_{xx}^2(x,t) + q(x) u_{x}^2(x,t) \Big)\, dx \right] \qquad \\
+ \Big( (\sigma(x) u_{xx})_x - q(x) u_x \Big)(1,t) u_t(1,t)
 - (\sigma(x) u_{xx})(1,t) u_{xt}(1,t) = 0. 
\end{gathered}
\end{align}
Furthermore, multiplying $\eqref{eq:pb1}_2$ by $w$ and performing integration by parts yields:
\begin{gather}
\frac{d}{dt}\left( \frac{\tau}{2} \int_{0}^{1}  w^2(s,t) \, ds \right) + \frac{1}{2} \Big( w^2(1,t) - w^2(0,t) \Big) = 0. \label{eq:66}
\end{gather}
Adding \eqref{eq:63} and $\gamma$\eqref{eq:66} and incorporating the boundary conditions $\eqref{eq:pb1}_4-\eqref{eq:pb1}_7$, we obtain:
\begin{align}
\begin{split}
&\frac{d}{dt} \left[ \frac{1}{2} \int_{0}^{1} \Big( u_t^2 + \sigma(x) u_{xx}^2 + q(x) u_{x}^2 \Big)\, dx + \frac{\gamma \tau}{2} \int_{0}^{1}  u_t^2(1,t- \tau s) \, ds \right]  \\
& \quad = - \left( \kappa_1 - \frac{\gamma}{2} \right) u_t^2(1,t) - \frac{\gamma}{2} u_t^2(1,t-\tau) - u_{xt}^2(1,t) - \kappa_2 u_t(1,t) u_t(1,t- \tau).
\end{split}
\end{align}
 After using Young's inequality and identities \eqref{eq:0} and \eqref{eq:19} to deduce that:
\begin{align*}
 \frac{d}{dt} E(t) & \leqslant - \frac{\gamma - |\kappa_2|}{2} u_t^2(1,t- \tau) - \left( \kappa_1 - \frac{\gamma + |\kappa_2|}{2} \right) u_t^2(1,t) - u_{xt}^2(1,t) \\
 \frac{d}{dt} E(t)	& \leqslant - C_{\kappa_1, \kappa_2}^\gamma \Big( u_t^2(1,t-\tau) + u_t^2(1,t) + u_{xt}^2(1,t) \Big) \leqslant 0,
\end{align*}
where $\displaystyle C_{\kappa_1, \kappa_2}^\gamma > 0$ is defined in \eqref{eq:58}.
\end{proof}

	\subsection{Exponential decay of the energy}
 	
 		In this section, we prove the exponential decay of the energy of system \eqref{eq:p1} and provide a decay rate estimate. To this end, we construct a Lyapunov functional which decreases along the trajectories of delayed system \eqref{eq:p1}. \newline
		
		Consider the following Lyapunov functional:
\begin{gather} \label{eq:70}
L(t) := E(t) + \varepsilon G(t), \quad t \geq 0, 
\end{gather}
where $\varepsilon > 0$ is a sufficiently small constant that we will choose hereinafter, $E(t)$ is the energy defined in \eqref{eq:56} and the functional $G(t)$ is given by:
\begin{gather} \label{eq:71}
G(t) := \int_0^1 u_t(x,t) \Big( 2x u_x(x,t) + \frac{\iota_{\sigma, q}}{2} u(x,t) \Big) \, dx + \gamma \tau \int_0^1 e^{-2\tau s} u_t^2(1, t-\tau s) \, ds,
\end{gather}	
with the constants $\gamma$ satisfying \eqref{eq:19} and  $\iota_{\sigma, q}$ such that:
\begin{gather} \label{eq:72}
\iota_{\sigma, q} := \max\left\{ \iota_\sigma, \frac{q_2}{q_0}\right\} < 2.
\end{gather}

The following proposition establishes the equivalence between the Lyapunov function $L(t)$ and the energy $E(t)$.
\begin{proposition} \label{p3}
Suppose that the function $\sigma$ is (WD) or (SD). For $\varepsilon > 0$ small enough, there are two positive constants $\Theta_1, \, \Theta_2$ such that:
\begin{gather} \label{eq:73}
\Theta_1 E(t) \leqslant L(t) \leqslant \Theta_2 E(t), \quad \forall t \geqslant 0,
\end{gather}
where $\displaystyle \Theta_1 \text{ and } \, \Theta_2$ are given by:
\begin{gather} \label{eq:74}
	\Theta_1 := 1 - \varepsilon C_{\iota_{\sigma, q}}, \quad \Theta_2 := 1 + \varepsilon C_{\iota_{\sigma, q}}
\end{gather}
with the positive constant $C_{\iota_{\sigma, q}}$ defined by:
\begin{gather} 
C_{\iota_{\sigma, q}} := 2 \max \left\{ 1; \, 1 + \frac{\iota_{\sigma, q}}{4}, \, \frac{1}{q_0} \left(1 + \frac{\iota_{\sigma, q}}{8} \right) \right\}. \label{eq:78a}
\end{gather}
\end{proposition}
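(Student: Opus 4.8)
The plan is to reduce the two-sided bound to a single pointwise estimate on the lower-order functional $G(t)$. Since $L(t) = E(t) + \varepsilon G(t)$, the claimed inequalities \eqref{eq:73} are equivalent to $|G(t)| \leqslant C_{\iota_{\sigma,q}} E(t)$: indeed, this gives $-\varepsilon C_{\iota_{\sigma,q}} E(t) \leqslant L(t) - E(t) \leqslant \varepsilon C_{\iota_{\sigma,q}} E(t)$, which is precisely $\Theta_1 E(t) \leqslant L(t) \leqslant \Theta_2 E(t)$ with $\Theta_1, \Theta_2$ as in \eqref{eq:74}. The ``$\varepsilon$ small enough'' hypothesis then amounts to requiring $\varepsilon < 1/C_{\iota_{\sigma,q}}$, which guarantees $\Theta_1 > 0$. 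So the whole proposition rests on controlling $G(t)$ by the energy $E(t)$, uniformly in $t$.

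To estimate $|G(t)|$, I would treat the two contributions in \eqref{eq:71} separately. For the delay integral, since $0 < e^{-2\tau s} \leqslant 1$ on $(0,1)$, one immediately gets $\gamma\tau \int_0^1 e^{-2\tau s} u_t^2(1,t-\tau s)\,ds \leqslant \gamma\tau\int_0^1 u_t^2(1,t-\tau s)\,ds$, so this term is bounded by the delay part of $2E(t)$. For the spatial integral $\int_0^1 u_t\,(2x u_x + \tfrac{\iota_{\sigma,q}}{2}u)\,dx$, I would apply Young's inequality to each product, using $|2x u_t u_x| \leqslant u_t^2 + x^2 u_x^2 \leqslant u_t^2 + u_x^2$ (the weight $x \leqslant 1$ is exactly what keeps this harmless near the degeneracy at $x=0$) and $\tfrac{\iota_{\sigma,q}}{2}|u_t u| \leqslant \tfrac{\iota_{\sigma,q}}{4}(u_t^2 + u^2)$.

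The decisive step is then to re-express the resulting $\int u_x^2$ and $\int u^2$ terms in terms of the energy density. Because $\sigma$ degenerates at $x=0$, these cannot be absorbed by the bending term $\int \sigma u_{xx}^2$; instead I would invoke the Poincaré-type chain of Remark \ref{r1}, namely \eqref{eq:10}, $\|u\|_{L^2}^2 \leqslant \|u'\|_{L^2}^2 \leqslant \tfrac{1}{q_0}\|\sqrt q\, u'\|_{L^2}^2$, which ties $\int u^2$ and $\int u_x^2$ to the coercive axial term $\int q u_x^2$. Collecting the coefficients of $\int u_t^2$, of $\int q u_x^2$, and of the delay integral, each contribution is a multiple of one of the nonnegative blocks making up $2E(t) = \int_0^1 (u_t^2 + \sigma u_{xx}^2 + q u_x^2)\,dx + \gamma\tau\int_0^1 u_t^2(1,t-\tau s)\,ds$. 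Taking the worst of these multiples, together with the factor $2$ accounting for the $\tfrac12$ in \eqref{eq:56}, yields the constant $C_{\iota_{\sigma,q}}$ of \eqref{eq:78a}.

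The main obstacle is precisely the degeneracy: the naive impulse to bound $\int u_x^2$ by $\int \sigma u_{xx}^2$ fails at $x=0$, so the entire estimate must be routed through the axial term $\int q u_x^2$, which is uniformly coercive by \eqref{eq:1}. This is exactly why the weighting $2x$ (vanishing at the degenerate endpoint) and the cutoff constant $\iota_{\sigma,q}$ are chosen as they are in the definition of $G$, and why Remark \ref{r1} is the crucial input. The remaining effort is routine bookkeeping of the Young-inequality constants to match \eqref{eq:78a} exactly.
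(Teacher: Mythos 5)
Your proposal is correct and follows essentially the same route as the paper: Young's inequality on the two cross terms of $G$, the Poincaré-type chain \eqref{eq:10} to route $\int u^2$ and $\int u_x^2$ through the coercive axial term $\int q u_x^2$, the crude bound $e^{-2\tau s}\leqslant 1$ on the delay integral, and then the triangle inequality with $\varepsilon < 1/C_{\iota_{\sigma,q}}$ to get $\Theta_1 > 0$. One bookkeeping caveat: with \eqref{eq:10} as stated, your treatment of $\tfrac{\iota_{\sigma,q}}{2}\int_0^1 |u_t u|\,dx$ produces the coefficient $\tfrac{1}{q_0}\bigl(1+\tfrac{\iota_{\sigma,q}}{4}\bigr)$ on $\lVert \sqrt{q}\,u_x\rVert_{L^2(0,1)}^2$ rather than the $\tfrac{1}{q_0}\bigl(1+\tfrac{\iota_{\sigma,q}}{8}\bigr)$ of \eqref{eq:78a} (a strictly larger constant when $q_0<1$); to match \eqref{eq:78a} exactly you need the refined Poincaré bound $\lVert u\rVert_{L^2(0,1)}^2 \leqslant \tfrac{1}{2}\lVert u'\rVert_{L^2(0,1)}^2$, obtained from $u(0)=0$ and Cauchy--Schwarz via $u(x)^2 \leqslant x\int_0^x (u')^2$, which the paper uses tacitly in \eqref{eq:78} beyond what Remark \ref{r1} literally asserts.
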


\begin{proof}
Applying Young's inequality, we get:
\begin{align} \label{eq:75}
\begin{split}
\left| \int_0^1 x u_t(x,t) u_x(x,t) dx \right|
& \leqslant \frac{1}{2} \left( \int_0^1 u_t^2(x,t) dx + \int_0^1 u_x^2(x,t) dx \right) \\
\left| \int_0^1 x u_t(x,t) u_x(x,t) dx \right| & \leqslant \frac{1}{2} \left( \lVert u_t(\cdot,t)\rVert_{L^2(0,1)}^2  + \frac{1}{q_0} \lVert \sqrt{q} u_x(\cdot,t)\rVert_{L^2(0,1)}^2  \right)
\end{split}
\end{align}
and
\begin{align} \label{eq:78}
\begin{split}
\left| \int_0^1 u_t(x,t) u(x,t) dx \right| 
	&\leqslant \frac{1}{2} \left( \int_0^1 u_t^2(x,t) dx + \int_0^1 u^2(x,t) dx \right) \\
\left| \int_0^1 u_t(x,t) u(x,t) dx \right| 	&\leqslant \frac{1}{2} \lVert u_t(\cdot,t)\rVert_{L^2(0,1)}^2 + \frac{1}{4 q_0} \lVert \sqrt{q} u_x(\cdot,t)\rVert_{L^2(0,1)}^2 .
\end{split}
\end{align}
So, using inequalities \eqref{eq:75} and \eqref{eq:78}, we have:
\begin{align*}
\left|G(t)\right|  \leqslant & \left( 1 + \frac{\iota_{\sigma, q}}{4}\right)\lVert u_t(\cdot,t)\rVert_{L^2(0,1)}^2 + \frac{1}{q_0} \left(1 + \frac{\iota_{\sigma, q}}{8} \right) \lVert \sqrt{q} u_x(\cdot,t)\rVert_{L^2(0,1)}^2 \\
& + \gamma \tau \int_0^1 u_t^2(1, t-\tau s) ds  \\
\left|G(t)\right| \leqslant & C_{\iota_{\sigma, q}} E(t),
\end{align*}
with the positive constant $C_{\iota_{\sigma, q}}$ defined by \eqref{eq:78a}.
By applying the triangle inequality on the preceding expression, we obtain the desired result. 
\end{proof}

As the energy derivative $E(t)$ is known, estimating the derivative of $L(t)$ requires that of the auxiliary term $G(t)$. This estimate is established in the result presented below.
\begin{proposition} \label{p4}
Let $\sigma$ be (WD) or (SD), and assume that \eqref{eq:72} holds. Then, for any regular solution $u$ of system \eqref{eq:p1}, we have the following estimate:
\begin{align} \label{eq:79}
\begin{split}
\frac{d}{dt} G(t) \leqslant 
& -\min\Big\{ 2 - \iota_{\sigma, q}; \, 4e^{-2\tau} \Big\} E(t) + \frac{9}{2} q(1) u_x^2(1,t) +  \frac{q(1)}{4} \iota_{\sigma, q}^2 u^2(1,t) \\
& + \left( 1 + \gamma + \frac{4}{q(1)} \kappa_1^2 \right) u_t^2(1,t) +  \left( \frac{4}{q(1)}\kappa_2^2-\gamma e^{-2\tau} \right)  u_t^2(1,t-\tau) \\
& + \left( \frac{1}{\sigma(1)} + \frac{1}{q(1)} \left( 2 + \frac{\iota_{\sigma, q}}{2} \right)^2 \right) u_{xt}^2(1,t)
, \quad \forall t \geqslant 0.
\end{split}
\end{align}
\end{proposition}

\begin{proof}
Firstly, we have:
\begin{gather} \label{eq:80}
\frac{d}{dt}\left( \int_0^1 x u_t(x,t)  u_x(x,t) dx \right) = \int_0^1 x \Big( u_{tt}(x,t) u_x(x,t) + u_t(x,t) u_{xt}(x,t)\Big) \, dx.
\end{gather}
Let us begin by estimating the first term on the right-hand side of the preceding identity. By equation $\eqref{eq:p1}_1$, we get:
\begin{gather}
\int_0^1 x u_{tt}(x,t) u_x(x,t) dx =  \int_0^1 x (q u_x)_x(x,t) u_x(x,t) dx - \int_0^1 x (\sigma u_{xx})_{xx}(x, t) u_x(x,t) dx. \label{eq:81}
\end{gather}
Performing integrations by parts and using the boundary conditions $\eqref{eq:p1}_2-\eqref{eq:p1}_4$, we obtain: 
\begin{align}
& \int_0^1 x (q u_x)_x(x,t) u_x(x,t) dx = \frac{1}{2} q(1) u_x^2(1,t) + \frac{1}{2} \int_0^1 \left( -q(x) + xq'(x) \right) u_x^2(x,t) dx \label{eq:82}\\
\begin{split}
& \int_0^1 x (\sigma(x) u_{xx})_{xx}(x, t) u_x(x,t) dx  = - \frac{1}{2} (\sigma(x) u_{xx}^2)(1,t) - (\sigma(x) u_{xx})(1,t) u_x(1,t)  \\
& \qquad + (\sigma u_{xx})_x(1,t) u_x(1,t) + \frac{1}{2}\int_0^1 \left( 3 \sigma(x) - x \sigma'(x) \right) u_{xx}^2(x,t) dx.
\end{split} \label{eq:85}
\end{align}
By the above identities \eqref{eq:82} and \eqref{eq:85}, the expression \eqref{eq:81} becomes:
\begin{align} \label{eq:86}
\begin{split}
\int_0^1 x u_{tt}(x,t) & u_x(x,t) dx  =  \frac{1}{2} q(1) u_x^2(1,t) + \frac{1}{2} (\sigma(x) u_{xx}^2)(1,t) + (\sigma(x) u_{xx})(1,t) u_x(1,t) \\
& -(\sigma(x) u_{xx})_x(1,t) u_x(1,t) + \frac{1}{2}\int_0^1 \left( -3 \sigma(x) + x \sigma'(x) \right) u_{xx}^2(x,t) \, dx\\
& +\frac{1}{2} \int_0^1 \left( -q(x) + xq'(x) \right) u_x^2(x,t) dx.
\end{split}
\end{align}
In addition, we have:
\begin{align} \label{eq:87}
\begin{split}
\int_0^1 x u_t(x,t) u_{xt}(x,t) dx & = \frac{1}{2} u_t^2(1,t) - \frac{1}{2} \int_0^1 u_t^2(x,t) dx.
\end{split}
\end{align}
Overall, by using \eqref{eq:86} and \eqref{eq:87}, the following holds:
\begin{align} \label{eq:88}
\begin{split}
& \frac{d}{dt}\left( \int_0^1 x u_t(x,t)  u_x(x,t) \, dx \right) \\
& = \frac{1}{2} \int_0^1 \Big(-q(x) + xq'(x) \Big) u_x^2(x,t) \, dx + \frac{1}{2}\int_0^1 \Big( -3 \sigma(x) + x \sigma'(x) \Big) u_{xx}^2(x,t) \, dx \\
& \quad - \frac{1}{2} \int_0^1 u_t^2(x,t) \, dx  + \frac{1}{2} u_t^2(1,t) + \frac{1}{2} q(1) u_x^2(1,t) + \frac{1}{2} \Big( \sigma(x) u_{xx}^2 \Big)(1,t) \\
& \quad + \Big( \sigma(x) u_{xx} \Big)(1,t) u_x(1,t) - \Big( \sigma(x) u_{xx}\Big)_x(1,t) u_x(1,t).
\end{split}
\end{align}
Secondly, we have:
\begin{align} \label{eq:89}
\begin{split}
\frac{d}{dt} \left( \int_0^1 u_t(x,t) u(x,t) \, dx \right) = & \int_0^1 \Big( \big( q(x) u_x \big)_x - \big( \sigma(x) u_{xx} \big)_{xx} \Big)(x,t) u(x,t)  \, dx \\
& + \int_0^1  u_t^2(x,t) \, dx.
\end{split}
\end{align}
After using integrations by parts, it follows that:
\begin{align} \label{eq:90}
\int_0^1 \big( q(x)u_x \big)_x(x,t) u(x,t) \, dx = & \, q(1) u_x(1,t) u(1,t) - \int_0^1 q(x) u_x^2(x,t) \, dx;\\
\begin{split} \label{eq:90a}
	\int_0^1 \big( \sigma(x) u_{xx} \big)_{xx}(x,t) u(x,t) \, dx = & - \big( \sigma(x) u_{xx} \big)(1,t) u_x(1,t) + \big( \sigma(x) u_{xx} \big)_x(1,t) u(1,t) \\
	& + \int_0^1 \sigma(x) u_{xx}^2(x,t) \, dx.
\end{split}
\end{align}
Consequently, we get:
\begin{align} \label{eq:92}
\begin{split}
& \frac{d}{dt} \left( \int_0^1 u_t(x,t) u(x,t) \, dx \right) = \int_0^1 u_t^2(x,t) \, dx - \int_0^1 q(x) u_x^2(x,t) - \int_0^1 \sigma(x) u_{xx}^2(x,t) \, dx \\
& \quad +  \Big( q(1) u_x(1,t) - (\sigma(x) u_{xx})_x(1,t) \Big) u(1,t) + (\sigma(x) u_{xx})(1,t) u_x(1,t).
\end{split}
\end{align}
Finally, by using the identities \eqref{eq:88} and \eqref{eq:92}, we obtain:
\begin{align} \label{eq:93}
\begin{split}
& \frac{d}{dt} \left( \int_0^1 u_t(x,t) \Big( 2x u_x(x,t) + \frac{\iota_{\sigma, q}}{2} u(x,t) \Big) \, dx \right) \\
& = \left( -1 + \frac{\iota_{\sigma, q}}{2} \right) \int_0^1 u_t^2(x,t) \, dx + \int_0^1 \Big( -\left(3 + \frac{\iota_{\sigma, q}}{2} \right) \sigma(x) + x \sigma'(x) \Big) u_{xx}^2(x,t) \, dx \\
& \quad + \int_0^1 \left( -\Big( 1+\frac{\iota_{\sigma, q}}{2} \Big) q(x) + x q'(x) \right) u_x^2(x,t) \, dx + u_t^2(1,t) + \sigma(1) u_{xx}^2(1,t) \\
& \quad + \Big( q(1) u_x(1,t) - (\sigma u_{xx})_x(1,t) \Big) \left( \frac{\iota_{\sigma, q}}{2} u(1,t) + u_x(1,t) \right) \\
& \quad + \left( 2 + \frac{\iota_{\sigma, q}}{2} \right) (\sigma(x) u_{xx})(1,t) u_x(1,t) - (\sigma u_{xx})_x(1,t) u_x(1,t)
\end{split}
\end{align}
Next, using the definition of $\displaystyle \iota_{\sigma, q}$, we have:
\begin{gather} \label{eq:94}
\begin{split}
-1 + \frac{\iota_{\sigma, q}}{2} \leqslant 0, \quad -\Big( 1+\frac{\iota_{\sigma, q}}{2} \Big) q(x) + x q'(x) \leqslant \Big( -1 + \frac{\iota_{\sigma, q}}{2} \Big) q(x) \leqslant 0,  \\
- \left( 3+\frac{\iota_{\sigma, q}}{2} \right) \sigma(x) + x \sigma'(x) \leqslant \left( -1 + \frac{\iota_{\sigma, q}}{2} \right) \sigma(x) \leqslant 0.
\end{split}
\end{gather}
So, we obtain:
\begin{align} \label{eq:95}
\begin{split}
& \frac{d}{dt} \left( \int_0^1 u_t(x,t) \Big( 2x u_x(x,t) + \frac{\iota_{\sigma, q}}{2} u(x,t) \Big) dx \right) \leqslant u_t^2(1,t) + \sigma(1) u_{xx}^2(1,t) \\
& \quad + \Big( -1 + \frac{\iota_{\sigma, q}}{2} \Big) \left( \int_0^1 u_t^2(x,t) \, dx + \int_0^1 q(x) u_x^2(x,t) \, dx + \int_0^1 \sigma(x) u_{xx}^2(x,t) \, dx \right)  \\
& \quad \underbrace{- \big(\sigma(x) u_{xx} \big)_x(1,t) u_x(1,t) + \left( 2 + \frac{\iota_{\sigma, q}}{2} \right) \big( \sigma(x) u_{xx} \big)(1,t) u_x(1,t)}_{(i)} \\
& \quad + \underbrace{\Big( q(1) u_x(1,t) - \big( \sigma(x) u_{xx} \big)_x(1,t) \Big) \Big( \frac{\iota_{\sigma, q}}{2} u(1,t) + u_x(1,t) \Big)}_{(ii)}.
\end{split}
\end{align}
The Young's inequality applied to $(i)$ and $(ii)$, together with boundary condition $\eqref{eq:p1}_4$ yields:
\begin{align}
\begin{split}
(i) \leqslant  
& \, \frac{7}{2} q(1)u_x^2(1,t) + \frac{3}{q(1)} \Big( \kappa_1^2 u_t^2(1,t) + \kappa_2^2 u_t^2(1, t-\tau)\Big) + \frac{1}{q(1)} \left( 2 + \frac{\iota_{\sigma, q}}{2} \right)^2 u_{xt}^2(1,t)
\end{split} \\
(ii) & \leqslant \frac{1}{q(1)} \Big( \kappa_1^2 u_t^2(1,t) + \kappa_2^2 u_t^2(1, t-\tau) \Big) + q(1)\frac{\iota_{\sigma, q}^2}{4}  u^2(1,t) + q(1) u_x^2(1,t).
\end{align}
Then \eqref{eq:95} becomes:
\begin{align} \label{eq:98}
\begin{split}
& \frac{d}{dt} \left( \int_0^1 u_t(x,t) \Big( 2x u_x(x,t) + \frac{\iota_{\sigma, q}}{2} u(x,t) \Big) dx \right) \\
& \quad \leqslant \Big( -1 + \frac{\iota_{\sigma, q}}{2} \Big) \left( \int_0^1 u_t^2(x,t) \, dx + \int_0^1 q(x) u_x^2(x,t) \, dx + \int_0^1 \sigma(x) u_{xx}^2(x,t) \, dx \right) \\
& \qquad + \left( 1 + \frac{4}{q(1)} \kappa_1^2 \right) u_t^2(1,t) + \frac{4}{q(1)} \kappa_2^2 u_t^2(1,t-\tau) + \frac{9}{2} q(1) u_x^2(1,t) + \frac{q(1)}{4} \iota_{\sigma, q}^2 u^2(1,t) \\
& \qquad + \left( \frac{1}{\sigma(1)} + \frac{1}{q(1)} \left( 2 + \frac{\iota_{\sigma, q}}{2} \right)^2 \right) u_{xt}^2(1,t). 
\end{split}
\end{align}
Furthermore, we have the following identity:
\begin{align} 
\begin{split} \label{eq:99}
\frac{d}{dt} \left( \int_0^1 e^{-2\tau s} u_t^2(1, t-\tau s) ds \right) & = \tau^{-1} \left( u_t^2(1,t) - e^{-2\tau} u_t^2(1, t-\tau) \right) \\
& \quad - 2 \int_0^1 e^{-2\tau s} u_t^2(1,t-\tau s) ds.
\end{split}
\end{align}
Finally, combining \eqref{eq:99} with \eqref{eq:98} yields the desired result.
\end{proof}

The following lemma establishes an integral energy estimate, a key result for proving the exponential stability of system \eqref{eq:p1}.
\begin{lemma} \label{l2}
Let $T>0$. Assume that \eqref{eq:72} holds and the function $\sigma$ is (WD) or (SD). For $\varepsilon > 0$ small enough,
we have for any $r \in (0,T)$:
\begin{align} \label{eq:102}
\begin{split}
\varepsilon \min\Big\{ 2 - \iota_{\sigma, q}; \, 4e^{-2\tau} \Big\} \int_r^T E(t) \, dt 
& \leqslant L(r) - L(T) \\
& \quad + \varepsilon C_0 \left[ \int_r^T u^2(1,t) \, dt + \int_r^T u_x^2(1,t) \, dt \right],
\end{split}
\end{align}
where $C_0$ is a positive constant given by:
\begin{align} \label{eq:103}
\begin{split}
C_0 &:= \max \left\{\frac{9}{2}q(1); \, \frac{q(1)}{4} \iota_{\sigma, q}^2 \right\}.
\end{split}
\end{align}
\end{lemma}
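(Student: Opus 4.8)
The plan is to differentiate the Lyapunov functional $L(t) = E(t) + \varepsilon G(t)$ pointwise, bound $\tfrac{d}{dt}L(t)$ from above by a negative multiple of $E(t)$ plus the boundary traces $u^2(1,t)$ and $u_x^2(1,t)$, and then integrate the resulting differential inequality over $[r,T]$. Concretely, I would first add the dissipation estimate of Proposition \ref{p2} to the estimate for $\tfrac{d}{dt}G(t)$ of Proposition \ref{p4} (weighted by $\varepsilon$), giving
\begin{align*}
\frac{d}{dt}L(t) &\leqslant -\varepsilon\min\Big\{2-\iota_{\sigma,q};\,4e^{-2\tau}\Big\}E(t) + \varepsilon\,\frac{q(1)}{2}u_x^2(1,t) + \varepsilon\,\frac{q(1)}{4}\iota_{\sigma,q}^2\,u^2(1,t) \\
&\quad + \Big(-C_{\kappa_1,\kappa_2} + \varepsilon\big(1+\gamma+\tfrac{2}{q(1)}\kappa_1^2\big)\Big)u_t^2(1,t) + \Big(-C_{\kappa_1,\kappa_2} + \varepsilon\big(\tfrac{2}{q(1)}\kappa_2^2-\gamma e^{-2\tau}\big)\Big)u_t^2(1,t-\tau).
\end{align*}

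The key step — and the only substantive one — is to eliminate the two boundary \emph{velocity} traces by showing their coefficients are non-positive for $\varepsilon$ small. Since $\gamma$ is chosen in the open interval $\big(|\kappa_2|,\,2\kappa_1-|\kappa_2|\big)$, which is non-empty by \eqref{eq:0}, we have $C_{\kappa_1,\kappa_2}>0$ from \eqref{eq:58}. Hence it suffices to impose
\begin{gather*}
0 < \varepsilon \leqslant \frac{C_{\kappa_1,\kappa_2}}{\max\big\{\,1+\gamma+\tfrac{2}{q(1)}\kappa_1^2;\ \tfrac{2}{q(1)}\kappa_2^2\,\big\}},
\end{gather*}
taken together with the smallness required in Proposition \ref{p3}; for such $\varepsilon$ both velocity coefficients are $\leqslant 0$, so the corresponding terms may be discarded from the upper bound. (When $\tfrac{2}{q(1)}\kappa_2^2-\gamma e^{-2\tau}\leqslant 0$, the delayed coefficient is already negative and imposes no constraint.)

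After dropping those terms, factoring $C_0 = \max\{\tfrac{q(1)}{2};\,\tfrac{q(1)}{4}\iota_{\sigma,q}^2\}$ from the two remaining boundary terms yields
\begin{gather*}
\frac{d}{dt}L(t) \leqslant -\varepsilon\min\Big\{2-\iota_{\sigma,q};\,4e^{-2\tau}\Big\}E(t) + \varepsilon\,C_0\Big(u^2(1,t) + u_x^2(1,t)\Big).
\end{gather*}
I would then integrate over $[r,T]$, use $\int_r^T\tfrac{d}{dt}L(t)\,dt = L(T)-L(r)$, and rearrange to place $L(r)-L(T)$ and the boundary-trace integrals on the right, which is exactly \eqref{eq:102}. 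The main obstacle is therefore conceptual rather than computational: verifying that the positive boundary-velocity contributions coming from $G'(t)$ can be absorbed by the strictly negative dissipation $-C_{\kappa_1,\kappa_2}$ supplied by $E'(t)$ once $\varepsilon$ is chosen small enough; everything after that is a direct integration and rearrangement.
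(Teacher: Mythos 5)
Your proposal is correct and follows essentially the same route as the paper: combine Proposition \ref{p2} with $\varepsilon$ times Proposition \ref{p4}, choose $\varepsilon$ small enough that the coefficients of $u_t^2(1,t)$ and $u_t^2(1,t-\tau)$ become non-positive (the paper's condition \eqref{eq:101} is, up to the harmless omission of the helpful $-\gamma e^{-2\tau}$ term, your $\max$ condition), drop those terms, factor out $C_0$, and integrate over $(r,T)$. Your remark that $\gamma$ must lie in the \emph{open} interval $\left(|\kappa_2|,\, 2\kappa_1 - |\kappa_2|\right)$ so that $C_{\kappa_1,\kappa_2} > 0$ is a correct and slightly more careful reading than the closed inequalities in \eqref{eq:19}.
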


\begin{proof}
Using Propositions \ref{p2} and \ref{p4}, we obtain, for a sufficiently small $\varepsilon$:
\begin{align} \label{eq:104}
	\frac{d}{dt} L(t) & \leqslant  - C_{\kappa_1, \kappa_2}^\gamma \Big( u_t^2(1, t) + u_t^2(1, t-\tau) + u_{xt}^2(1, t) \Big) + \varepsilon \left[ -\min\Big\{ 2 - \iota_{\sigma, q}; \, 4e^{-2\tau} \Big\} E(t)  \right. \nonumber \\
& \left. \quad + \frac{9}{2} q(1) u_x^2(1,t) +  \frac{q(1)}{4} \iota_{\sigma, q}^2 u^2(1,t) + \left( 1 + \gamma + \frac{4}{q(1)} \kappa_1^2 \right) u_t^2(1,t)  \right. \nonumber\\
& \left. \quad +  \left( \frac{4}{q(1)}\kappa_2^2-\gamma e^{-2\tau} \right)  u_t^2(1,t-\tau) + \left( \frac{1}{\sigma(1)} + \frac{1}{q(1)} \left( 2 + \frac{\iota_{\sigma, q}}{2} \right)^2 \right) u_{xt}^2(1,t) \right] \nonumber \\
 		&\leqslant - \varepsilon \min\Big\{ 2 - \iota_{\sigma, q}; \, 4e^{-2\tau} \Big\} E(t) - C_\varepsilon \Big(u_t^2(1, t) + u_t^2(1, t-\tau) + u_{xt}^2(1,t)\Big) \nonumber \\
 		& \quad + \varepsilon C_0 \Big( u^2(1,t) + u_x^2(1,t) \Big) \nonumber \\
	\frac{d}{dt} L(t) & \leqslant - \varepsilon \min\Big\{ 2 - \iota_{\sigma, q}; \, 4e^{-2\tau} \Big\} E(t) + \varepsilon C_0 \Big( u^2(1,t) + u_x^2(1,t) \Big).
\end{align}
with the constants $C_\varepsilon$ and $C_0$ defined by:
\begin{align}
\begin{split}
C_\varepsilon &:= \min \left\{C_{\kappa_1, \kappa_2}^\gamma - \varepsilon  \frac{4}{q(1)} \kappa_2^2, \, C_{\kappa_1, \kappa_2}^\gamma - \varepsilon \left( 1 + \gamma + \frac{4}{q(1)} \kappa_1^2 \right), \right. \\
& \qquad \qquad \left. C_{\kappa_1, \kappa_2}^\gamma - \varepsilon \left( \frac{1}{\sigma(1)} + \frac{1}{q(1)} \left( 2 + \frac{\iota_{\sigma,q}}{2} \right)^2 \right)\right\}
\end{split}
\end{align} 
and \eqref{eq:103}, respectively. We choose $\varepsilon$ small enough such that:
\begin{gather} \label{eq:101}
\varepsilon < \min\left\{ \frac{C_{\kappa_1, \kappa_2}^\gamma }{4 \kappa_2^2}q(1); \; \frac{C_{\kappa_1, \kappa_2}^\gamma }{1 + \gamma + \frac{4}{q(1)} \kappa_1^2}, \, \frac{C_{\kappa_1, \kappa_2}^\gamma}{\frac{1}{\sigma(1)} + \frac{1}{q(1)} \left( 2 + \frac{\iota_{\sigma,q}}{2} \right)^2} \right\}.
\end{gather}
Then $C_0$ and $C_\varepsilon$ are positive. By integrating the differential inequality \eqref{eq:104} over $(r, T)$ for arbitrary $r \in (0, T)$, we deduce \eqref{eq:102}.
\end{proof}

To estimate the integral terms on the right-hand side of \textcolor{blue}{relation} \eqref{eq:102}, the following proposition proves the existence, uniqueness, and requisite critical bounds for the solution of a degenerate elliptic equation.
\begin{proposition} \label{p5}
 Suppose that the function $\sigma$ is (WD) or (SD). Define:
\begin{gather} \label{eq:105}
||| y |||^2 := \int_0^1 \sigma(x) (y''(x))^2 dx + \int_0^1 q(x) (y'(x))^2 dx, 
\end{gather}
for all $y \in H^2_{\sigma,0}(0,1)$. 
The norms $|||\cdot|||, \text{ and } \lVert \, \cdot  \,\lVert_{2, \sigma}$ are equivalent on $H^2_{\sigma,0}(0,1)$. In addition, for all $\lambda, \mu \in \mathbb{R}$, the variational problem 
\begin{gather} \label{eq:106}
\int_0^1 \sigma(x) y''(x) \varphi''(x) \, dx + \int_0^1 q(x) y'(x) \varphi'(x) \, dx = \lambda \varphi(1) + \mu \varphi'(1) \quad \forall \varphi \in H^2_{\sigma,0}(0,1),
\end{gather}
admits a unique solution $y \in H^2_{\sigma,0}(0,1)$, which satisfies the following estimates:
\begin{gather} \label{eq:107}
\lVert y \rVert^2_{L^2(0,1)} \leqslant  \frac{1}{q_0} C_{\sigma, q, \iota, \lambda, \mu}^2 \quad \text{ and } \quad ||| y |||^2 \leqslant  C_{\sigma, q, \iota, \lambda, \mu}^2 ,
\end{gather}
where the constant $C_{\sigma, q, \iota, \lambda, \mu}$ is defined by:
\begin{gather} \label{eq:108}
C_{\sigma, q, \iota, \lambda, \mu} := |\lambda| \sqrt{\frac{1}{q_0}} + \sqrt{2}|\mu| C_1, \text{ with } \, C_1 := \sqrt{\max \left\{ \frac{1}{q_0}; \, \frac{1}{\sigma(1) \left( 2-\iota_\sigma \right)} \right\}} .
\end{gather}
Moreover $y \in \mathcal{D}\left( A_\sigma \right) := Q_{\sigma, 0}(0, 1)$ verifies the following system:
\begin{gather} \label{eq:109}
\left\{
\begin{array}{l}
A_\sigma y = 0, \\
q(1) y'(1) - (\sigma y'')'(1) = \lambda, \\
\sigma(1) y''(1) = \mu.
\end{array}
\right. \; \text{ where } \quad
A_\sigma y := \left( \sigma y'' \right)'' - (q y')'.
\end{gather}
\end{proposition}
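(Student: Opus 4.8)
The plan is to treat \eqref{eq:106} as a coercive variational problem, solve it by the Lax--Milgram theorem, and then bootstrap regularity exactly as in the surjectivity argument of Lemma \ref{l1}. I would first settle the norm equivalence. The bound $|||y|||^{2}\leqslant q_{1}\|y'\|_{L^{2}(0,1)}^{2}+\|\sqrt\sigma\,y''\|_{L^{2}(0,1)}^{2}\leqslant C\,\|y\|_{2,\sigma}^{2}$ is immediate from $q(x)\leqslant q_{1}$ and the definition \eqref{eq:8}. For the reverse inequality I would invoke Remark \ref{r1}: estimate \eqref{eq:10} gives $\|y\|_{L^{2}(0,1)}^{2}\leqslant\|y'\|_{L^{2}(0,1)}^{2}\leqslant\frac{1}{q_{0}}\|\sqrt q\,y'\|_{L^{2}(0,1)}^{2}\leqslant\frac{1}{q_{0}}|||y|||^{2}$, while $\|\sqrt\sigma\,y''\|_{L^{2}(0,1)}^{2}\leqslant|||y|||^{2}$ trivially; summing controls $\|y\|_{2,\sigma}^{2}$ by a multiple of $|||y|||^{2}$. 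Hence $|||\cdot|||$ and $\|\cdot\|_{2,\sigma}$ are equivalent on $H^{2}_{\sigma,0}(0,1)$.

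Second, for existence and uniqueness I would set $a(y,\varphi):=\int_{0}^{1}\sigma y''\varphi''\,dx+\int_{0}^{1}q\,y'\varphi'\,dx$, which is exactly the inner product inducing $|||\cdot|||$, and $\ell(\varphi):=\lambda\varphi(1)+\mu\varphi'(1)$. Bilinearity, boundedness $|a(y,\varphi)|\leqslant|||y|||\,|||\varphi|||$ (Cauchy--Schwarz) and coercivity $a(y,y)=|||y|||^{2}$ (equivalent to $\|\cdot\|_{2,\sigma}^{2}$ by the first step) are clear. The crux here is the continuity of $\ell$: since $\varphi(0)=0$, the fundamental theorem of calculus together with \eqref{eq:10} gives $|\varphi(1)|\leqslant\|\varphi'\|_{L^{2}(0,1)}\leqslant\frac{1}{\sqrt{q_{0}}}|||\varphi|||$, whereas the trace estimate \eqref{eq:11} yields $|\varphi'(1)|\leqslant\sqrt2\,C_{1}|||\varphi|||$ with $C_{1}$ as in \eqref{eq:108}. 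Adding these gives $|\ell(\varphi)|\leqslant C_{\sigma,q,\iota,\lambda,\mu}|||\varphi|||$, and Lax--Milgram furnishes the unique solution $y\in H^{2}_{\sigma,0}(0,1)$.

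Third, the estimates \eqref{eq:107} follow by testing \eqref{eq:106} with $\varphi=y$: this gives $|||y|||^{2}=\ell(y)\leqslant C_{\sigma,q,\iota,\lambda,\mu}|||y|||$, hence $|||y|||\leqslant C_{\sigma,q,\iota,\lambda,\mu}$, and $\|y\|_{L^{2}(0,1)}^{2}\leqslant\frac{1}{q_{0}}|||y|||^{2}$ closes the bound. To reach the strong form \eqref{eq:109} I would mimic Lemma \ref{l1}: restricting the test functions to $\mathcal{D}(0,1)$ turns \eqref{eq:106} into $(\sigma y'')''-(q y')'=0$ in $\mathcal{D}'(0,1)$, i.e. $A_{\sigma}y=0$; the attendant regularity then places $y$ in $Q_{\sigma,0}(0,1)$, after which integrating \eqref{eq:106} by parts twice and using $A_{\sigma}y=0$ collapses the identity to $\big(\sigma(1)y''(1)-\mu\big)\varphi'(1)+\big(q(1)y'(1)-(\sigma y'')'(1)-\lambda\big)\varphi(1)=0$ for every $\varphi\in H^{2}_{\sigma,0}(0,1)$; by the independence of the traces $\varphi(1)$ and $\varphi'(1)$, identification yields $\sigma(1)y''(1)=\mu$ and $q(1)y'(1)-(\sigma y'')'(1)=\lambda$.

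I expect the regularity bootstrap to be the main obstacle: showing $\sigma y''\in H^{2}(0,1)$ so that $y\in Q_{\sigma,0}(0,1)$, and in particular justifying that the boundary contributions at the degenerate endpoint $x=0$ vanish (the condition $y'(0)=0$ in the (WD) case, $(\sigma y'')(0)=0$ in the (SD) case), is delicate because $\sigma$ degenerates there and the elementary integration by parts must be underpinned by the weighted elliptic regularity theory of $A_{\sigma}$ developed in the cited references and encoded in the definitions \eqref{eq:13}--\eqref{eq:14}. Away from $x=0$ the operator is uniformly elliptic, so the traces at $x=1$ and the corresponding boundary conditions are recovered without difficulty.
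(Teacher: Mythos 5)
Your proposal is correct and follows essentially the same route as the paper: Lax--Milgram applied to the form $\chi(y,\varphi)=\int_0^1 \sigma y''\varphi''\,dx+\int_0^1 q y'\varphi'\,dx$, continuity of the right-hand side via the trace estimates \eqref{eq:10}--\eqref{eq:11} (yielding exactly the constant $C_{\sigma,q,\iota,\lambda,\mu}$), the bounds \eqref{eq:107} by testing with $\varphi=y$, and recovery of the strong system \eqref{eq:109} by first testing against $\mathcal{D}(0,1)$ and then identifying the boundary terms at $x=1$ (and, in the (SD) case, $(\sigma y'')(0)=0$). You even fill in details the paper leaves as ``straightforward'' (the norm equivalence via Remark \ref{r1} and the explicit continuity of $\ell$), and your caveat about the regularity bootstrap $\sigma y''\in H^2(0,1)$ is handled no less briskly by the paper itself, so there is no substantive divergence.
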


\begin{proof}
Let $\varphi \in H_{\sigma,0}^2(0,1)$. We have:
\begin{gather} \label{eq:110}
\int_{0}^{1} \Big( \left( \sigma y'' \right)'' - \left( q y' \right)'\Big)\varphi \, dx = 0.
\end{gather}
Integrating the left hand side of \eqref{eq:110} by parts and using the boundary conditions $\eqref{eq:109}_2-\eqref{eq:109}_3$, it follows that:
\begin{gather} \label{eq:111}
\underbrace{\int_{0}^{1} \sigma(x) y''(x) \varphi''(x) \, dx + \int_{0}^{1} q(x) y'(x) \varphi'(x) \, dx}_{\chi (y, \varphi)} = \underbrace{\lambda \varphi(1) + \mu \varphi'(1)}_{\Psi(\varphi)}.
\end{gather}
The variational problem associated with \eqref{eq:109} consists to find $y \in H_{\sigma,0}^2(0,1)$ satisfying:
\begin{gather} \label{eq:112}
\chi (y, \varphi) = \Psi(\varphi), \quad \forall \varphi \in H_{\sigma,0}^2(0,1).
\end{gather}

It is straightforward to prove that the bilinear form $\chi$ is continuous and coercive on $H_{\sigma,0}^2(0,1)$. Routine calculations show that the linear form $\Psi$ is continuous on $H_{\sigma,0}^2(0,1)$. According to Lax-Milgram theorem, there exists a unique solution $y \in H_{\sigma,0}^2(0,1)$ which solves \eqref{eq:112}. In particular, we have:
\begin{gather} \label{eq:113}
|||y|||^2 = \int_0^1 \Big( \sigma(x)(y''(x))^2 + q(x) (y'(x))^2 \Big) \, dx = \lambda y(1) + \mu y'(1).
\end{gather}
In addition, the following inequalities hold:
\begin{align} \label{eq:114}
\left( y(1) \right)^2  & \leqslant q_0^{-1} \lVert \sqrt{q}y' \rVert_{L^2(0,1)}^2 \leqslant q_0^{-1} |||y|||^2 \\
\begin{split} \label{eq:115}
\left( y'(1) \right)^2 & \leqslant 2 \left( \frac{1}{\sigma(1) \left( 2-\iota_\sigma \right)} \lVert \sqrt{\sigma}y'' \rVert_{L^2(0,1)}^2 + \frac{1}{q_0} \lVert \sqrt{q}y' \rVert_{L^2(0,1)}^2 \right) \\
\left( y'(1) \right)^2 & \leqslant 2 \max \left\{ \frac{1}{\sigma(1) \left( 2-\iota_\sigma \right)}; \, \frac{1}{q_0} \right\} |||y|||^2.
\end{split}
\end{align}
Then, we get:
\begin{align} \label{eq:116}
|||y|||^2 & \leqslant \left( |\lambda| \sqrt{\frac{1}{q_0}} + |\mu| \sqrt{2 \max \left\{ \frac{1}{q_0}; \, \frac{1}{\sigma(1) \left( 2-\iota_\sigma \right)} \right\}} \right) |||y||| 
\end{align}
Hence the second inequality of \eqref{eq:107} holds with the constant $C_{\sigma, q, \iota, \lambda, \mu}$ defined in \eqref{eq:108}.
Using \eqref{eq:10}, we also have:
\begin{gather} \label{eq:118}
\lVert y \rVert^2_{L^2(0,1)} \leqslant \frac{1}{q_0} |||y|||^2 \leqslant \frac{1}{q_0} C_{\sigma, q, \iota, \lambda, \mu}^2.
\end{gather}

Conversely, let $y$ be a weak solution of \eqref{eq:109}. Taking  $\varphi \in \mathcal{D}(0,1)$ and performing integrations by parts, we obtain:
\begin{align} \label{eq:120}
\begin{split}
\int_{0}^{1} \Big( (\sigma y'')''(x) - (qy')'(x) \Big) \varphi(x) \, dx = 0.
\end{split}
\end{align}
Then $\displaystyle (\sigma y'')'' - (q y')' = 0$ a.e. in $(0,1)$. As $\mathcal{D}(0,1)$ is dense in $L^2(0,1)$, equation $\eqref{eq:109}_1$ holds. Therefore $\sigma y'' \in H^2(0,1)$ and we deduce that $y \in Q_{\sigma}(0,1)$. \newline
Coming back to $H^2_{\sigma,0}(0,1)$ and after using integrations by parts, we obtain from the above that equation \eqref{eq:112} becomes:
\begin{itemize}
\item if $\sigma$ is (WD), then $y'(0) = \varphi'(0) = 0$ and:
\begin{gather} \label{eq:121}
\Big( q(1)y'(1) - (\sigma y'')'(1) - \lambda \Big) \varphi(1) + \Big( \sigma(1) y''(1) - \mu \Big) \varphi'(1) = 0;
\end{gather}
\item if $\sigma$ is (SD), then we have:
\begin{gather} \label{eq:122}
\begin{gathered} 
\Big( q(1)y'(1) - (\sigma y'')'(1) - \lambda \Big) \varphi(1) + \Big( \sigma(1) y''(1) - \mu \Big) \varphi'(1) \\
- \, \sigma(0) y''(0) \varphi'(0) = 0,
\end{gathered}
\end{gather}
\end{itemize}
for every $\varphi \in H^2_{\sigma,0}(0,1)$. Then the conditions $\eqref{eq:109}_2$ and $\eqref{eq:109}_3$ are satisfied regardless of the degeneracy type of the function $\sigma$ and $(\sigma y'')(0) = 0$ in the single case (SD).
Thus $y \in Q_{\sigma,0}(0,1)$. Finally, $y$ solves \eqref{eq:109}.
\end{proof}

By virtue of Proposition \ref{p5}, the following result provides a crucial energy-based estimate for the integral terms on the right-hand side of estimate \eqref{eq:102}.
\begin{lemma} \label{l3}
Suppose that the function $\sigma$ is (WD) or (SD). Let $u$ be a solution of system \eqref{eq:p1}. Under assumption \eqref{eq:72}, the following estimate holds:
\begin{align}  \label{eq:124}
\begin{split}
\int_r^T \left( u^2(1,t) + u_x^2(1,t) \right) \, dt & \leqslant 2 \bigg[ \tilde{\delta} \int_r^T E(t) \, dt + C_2^\delta \Big( E(r) - E(T) \Big) \\
& \qquad \; + C_3 \Big( E(r) + E(T) \Big) \bigg],
\end{split}
\end{align}
for every $\tilde{\delta} > 0$, where the constants $C_2^\delta$ and $C_3$ are defined as follows:
\begin{gather}
C_2^\delta 
:= \dfrac{ \max\left\{ 2C_1^2; \, \dfrac{1}{q_0} \right\} }{\tilde{\delta} q_0  C_{\kappa_1, \kappa_2}^\gamma} + \dfrac{\max \left\{ \dfrac{\kappa_1^2}{\delta}; \, \dfrac{\kappa_2^2}{\delta}; \, \dfrac{1}{2\delta} \right\}}{C_{\kappa_1, \kappa_2}^\gamma},  \label{eq:125}
\end{gather}
with
\begin{gather} \label{eq:125a}
\delta := \frac{1}{2} \left[ \max\left\{ 2C_1^2; \, \frac{1}{q_0} \right\}  \left( 2 C_1^2 + \frac{1}{q_0} \right) \right]^{-1}
\end{gather}
and 
\begin{gather} 
C_3 := \max\left\{1; \, \dfrac{2}{q_0^2} \left( 4 C_1^2 + \dfrac{1}{q_0} \right); \, \dfrac{8 C_1^2}{q_0 \sigma(1) \left( 2 - \iota_\sigma \right)} \right\}. \label{eq:126}
\end{gather}
\end{lemma}

\begin{proof}
Setting $\lambda := u(1,t)$ and $\mu = u_{x}(1,t)$, we consider $y(\cdot, t) \in H^2_{\sigma,0}(0,1)$ the unique solution of
\eqref{eq:106}. By Proposition \ref{p5}, $y(\cdot, t) \in \mathcal{D}(A_\sigma)$ and solves:
\begin{gather} \label{eq:127}
\left\{
\begin{array}{l}
\left( \sigma(x) y_{xx} \right)_{xx} - (q(x) y_x)_x = 0, \\
q(1) y_x(1,t) - (\sigma(x) y_{xx})_x(1,t) = \lambda, \\
\sigma(1) y_{xx}(1,t) = \mu.
\end{array}
\right.
\end{gather}
First, multiplying $\eqref{eq:p1}_1$ by $y$ and integrating it over $(r,T)\times(0,1)$, we have:
\begin{gather} \label{eq:128}
\int_r^T \int_0^1 \Big( u_{tt} + \left( \sigma(x) u_{xx} \right)_{xx} - \left( q(x) u_x \right)_x \Big) y \, dx \, dt = 0.
\end{gather}
Performing integrations by parts and using the boundary condition $\eqref{eq:p1}_3$, we obtain:
\begin{gather}
\begin{gathered} \label{eq:129}
\int_r^T \int_0^1 u_{tt} y \, dx \, dt = \left[ \int_0^1 u_t y dx \right]_{t=r}^{t=T} - \int_r^T \int_0^1 u_{t} y_t \, dx \, dt
\end{gathered} \\
\begin{gathered} \label{eq:130}
\int_r^T \int_0^1 \left( \sigma(x) u_{xx} \right)_{xx} y \, dx \, dt 
=  \int_r^T \int_0^1 \sigma(x) u_{xx} y_{xx} \, dx \, dt \\
\qquad + \int_r^T \left[ \big( \sigma(x) u_{xx} \big)_x(1,t) y(1,t) - \big( \sigma(x) u_{xx} \big)(1,t) y_x(1,t) \right] \, dt
\end{gathered} \\
\begin{gathered} \label{eq:131}
\int_r^T \int_0^1 \left( q(x) u_{x} \right)_{x} y \, dx \, dt = \int_r^T q(1) u_x(1,t) y(1,t) \, dt - \int_r^T \int_0^1 q(x) u_{x} y_x \, dx \, dt.
\end{gathered}
\end{gather}
Using identities \eqref{eq:129}-\eqref{eq:131}, 
the expression \eqref{eq:128} becomes:
\begin{align} \label{eq:132}
\begin{gathered}
\left[ \int_0^1 u_t y \, dx \right]_{t=r}^{t=T} 
- \int_r^T \int_0^1 u_{t} y_t \, dx \, dt  \\
+  \int_r^T \int_0^1 \sigma(x) u_{xx} y_{xx} \, dx \, dt + \int_r^T \int_0^1 q(x) u_{x} y_x \, dx \, dt \\
+ \int_r^T \left[ \Big( \left( \sigma(x) u_{xx} \right)_x(1,t) - q(1) u_x(1,t) \Big) y(1,t) - \left( \sigma(x) u_{xx} \right)(1,t) y_x(1,t) \right]\, dt  = 0. 
\end{gathered}
\end{align}
On the other part, multiplying $\eqref{eq:109}_1$ by $u$ and integrating it over $(r,T)\times(0,1)$, we have:
\begin{gather} \label{eq:133}
\int_r^T \int_0^1 \Big( \left( \sigma(x) y_{xx} \right)_{xx} - (q(x) y_x)_x \Big) u \, dx \, dt = 0.
\end{gather}
Upon integrating by parts, the preceding expression yields:
\begin{gather} 
\int_r^T \int_0^1 \Big( \sigma(x) y_{xx} u_{xx} + q(x) u_x y_x \Big) \, dx \, dt \,  \nonumber \\ 
= \int_r^T \left[ \Big( \left( q(1) y_x(1,t) - \sigma(x) y_{xx} \right)_{x}(1,t) \Big) u(1,t) + \sigma(1) y_{xx}(1,t) u_x(1,t) \right] \, dt. \label{eq:134}
\end{gather}
After incorporating the boundary conditions $\eqref{eq:p1}_3-\eqref{eq:p1}_4$ and substituting \eqref{eq:134} into \eqref{eq:132}, it follows that:
\begin{align} \label{eq:138}
\begin{split}
& \int_r^T \int_0^1 u_{t} y_t \, dx \, dt - \left[ \int_0^1 u_t y \, dx \right]_{t=r}^{t=T} 
= \int_r^T \left( u^2(1,t) + u_x^2(1,t) \right) \, dt \\
& \quad + \int_r^T \left[ \Big( \kappa_1 u_t(1,t) + \kappa_2  u_t(1,t-\tau) \Big) y(1,t) + u_{xt}(1,t) y_x(1,t) \right] \, dt.
\end{split}
\end{align}
Therefore:
\begin{align} \label{eq:139}
& \int_r^T \left( u^2(1,t) + u_x^2(1,t) \right) \, dt = \int_r^T \int_0^1 u_{t} y_t \, dx \, dt - \left[ \int_0^1 u_t y \, dx \right]_{t=r}^{t=T} \nonumber \\
& \qquad - \int_r^T \left[ \Big( \kappa_1 u_t(1,t) + \kappa_2  u_t(1,t-\tau) \Big) y(1,t) + u_{xt}(1,t) y_x(1,t) \right] \, dt.
\end{align}
Moreover, using the inequalities \eqref{eq:10}, \eqref{eq:11} and \eqref{eq:107}, we have:
\begin{align}
\left| \int_0^1 u_t(x,t) y(x,t) \, dx \right| & \leqslant \frac{1}{2} \left( \int_0^1  u_t^2(x,t) \, dx + \frac{1}{q_0} C_{\sigma, q, \iota, \lambda, \mu}^2 \right) \nonumber\\
& \quad \leqslant \frac{1}{2} \int_0^1 u_t^2(x,t) \, dx + \frac{1}{q_0^2} u^2(1,t) + \frac{2 C_1^2}{q_0} u_x^2(1,t) \nonumber \\
& \quad \leqslant \frac{1}{2} \int_0^1 u_t^2(x,t) \, dx + \left( \dfrac{1}{q_0^3} + \dfrac{4 C_1^2}{q_0^2} \right) \int_0^1 q(x) u_x^2 \, dx \nonumber \\
& \qquad + \frac{4 C_1^2}{q_0 \sigma(1) \left( 2 - \iota_\sigma \right)} \int_0^1 \sigma(x) u_{xx}^2 \, dx \nonumber \\
\left| \int_0^1 u_t(x,t) y(x,t) \, dx \right| & \leqslant \underbrace{\max\left\{1; \, \dfrac{2}{q_0^2} \left( 4 C_1^2 + \dfrac{1}{q_0} \right); \, \dfrac{8 C_1^2}{ q_0 \sigma(1) \left( 2 - \iota_\sigma \right)} \right\}}_{C_3}  E(t).
\end{align}
So, we get:
\begin{gather} \label{eq:140}
\left| \left[ \int_0^1 u_t(x,t) y(x,t) \, dx \right]_{t=r}^{t=T} \right| \leqslant C_3 \Big( E(T) + E(r) \Big).
\end{gather}
Applying $\delta-$Young's inequality, we obtain:
\begin{align}  \label{eq:141}
\begin{gathered}
\left| \int_r^T \left[ \Big( \kappa_1 u_t(1,t) + \kappa_2  u_t(1,t-\tau) \Big) y(1,t) + u_{xt}(1,t) y_x(1,t) \right] dt \right| 
\leqslant \frac{1}{2\delta} \int_r^T u_{xt}^2(1,t) \, dt \\ 
 + \frac{1}{\delta} \int_r^T \Big( \kappa_1^2 u_t^2(1,t) + \kappa_2^2  u_t^2(1,t-\tau) \Big) \, dt + \frac{\delta}{2} \int_r^T \left( y^2(1,t) + y_x^2(1,t) \right) \, dt.
\end{gathered}
\end{align}
Using the following inequality:
\begin{align} \label{eq:142}
\begin{split}
y^2(1,t) + y_x^2(1,t) \leqslant 2 \max\left\{ 2C_1^2; \, \frac{1}{q_0} \right\}  \left( 2 C_1^2 + \frac{1}{q_0}  \right) \left( u^2(1,t) +  u^2_x(1,t) \right),
\end{split}
\end{align}
the expression \eqref{eq:141} becomes:
\begin{align} \label{eq:143}
& \left| \int_r^T \left[ \Big( \kappa_1 u_t(1,t) + \kappa_2  u_t(1,t-\tau) \Big) y(1,t) + u_{xt}(1,t) y_x(1,t) \right] \, dt \right|  \nonumber \\
& \quad \leqslant \max \left\{ \frac{\kappa_1^2}{\delta}; \, \frac{\kappa_2^2}{\delta}; \, \frac{1}{2\delta} \right\} \int_r^T \Big( u_t^2(1,t) + u_t^2(1,t-\tau) + u_{xt}^2(1,t) \Big) \, dt \nonumber \\
& \qquad  + \delta \max\left\{ 2C_1^2; \, \frac{1}{q_0} \right\}  \left( 2 C_1^2 + \frac{1}{q_0}  \right) \int_r^T \Big( u^2(1,t) + u^2_x(1,t) \Big) \, dt \nonumber \\
& \quad \leqslant \delta \max\left\{ 2C_1^2; \, \frac{1}{q_0} \right\}  \left( 2 C_1^2 + \frac{1}{q_0}  \right) \int_r^T \Big( u^2(1,t) + u^2_x(1,t) \Big) \, dt \nonumber \\
& \qquad + \frac{\max \left\{ \frac{\kappa_1^2}{\delta}; \, \frac{\kappa_2^2}{\delta}; \, \frac{1}{2\delta} \right\}}{C_{\kappa_1, \kappa_2}^\gamma} \Big( E(r) - E(T) \Big).
\end{align}

We now estimate $\displaystyle \int_r^T \int_0^1 u_t y_t \, dx  \, dt$. For this purpose, we consider the following problem:
\begin{gather} \label{eq:144}
\left\{
\begin{array}{l}
\big( \sigma(x) (y_t)_{xx} \big)_{xx} - \big(q(x) (y_t)_x \big)_x = 0, \\
q(1) (y_t)_x(1,t) - \big(\sigma(x) (y_t)_{xx} \big)_x(1,t) = u_t(1,t), \\
\sigma(1) (y_t)_{xx}(1,t) = u_{xt}(1,t).
\end{array}
\right.
\end{gather}
By Proposition \ref{p5}, $y_t(t, \cdot)$ solves the system \eqref{eq:144}. Consequently, the following estimate holds:
\begin{gather} \label{eq:145}
	\lVert y_t \rVert^2_{L^2(0,1)} \leqslant \frac{2}{q_0} \max\left\{ 2C_1^2; \, \frac{1}{q_0} \right\} \Big( u^2_t(1,t) + u^2_{xt}(1,t) \Big).
\end{gather}
Thus for any $\tilde{\delta} > 0$, we have:
\begin{align} \label{eq:146}
 \int_r^T \int_0^1 \left| u_t y_t \right| \, dx \, dt & \leqslant \frac{\tilde{\delta}}{2} \int_r^T \int_0^1 u_t^2 \, dx \, dt + \frac{1}{2\tilde{\delta}} \int_r^T \int_0^1 y_t^2 \, dx \, dt \nonumber \\
 \int_r^T \int_0^1 \left| u_t y_t \right| \, dx \, dt  & \leqslant  \tilde{\delta} \int_r^T E(t) \, dt + \frac{ \max\left\{ 2C_1^2; \, \frac{1}{q_0} \right\} }{\tilde{\delta} q_0  C_{\kappa_1, \kappa_2}^\gamma} \Big( E(r) - E(T) \Big).
\end{align}
Furthermore, taking into account inequalities \eqref{eq:140}, \eqref{eq:143} and \eqref{eq:146}, the relation \eqref{eq:139} becomes:
\begin{align} \label{eq:147}
\begin{split}
& \int_r^T \left( u^2(1,t) + u_x^2(1,t) \right) \, dt 
\leqslant \tilde{\delta} \int_r^T E(t) \, dt + C_3 \Big( E(r) + E(T) \Big) \\
& \quad + \left( \dfrac{ \max\left\{ 2C_1^2; \, \frac{1}{q_0} \right\} }{\tilde{\delta} q_0  C_{\kappa_1, \kappa_2}^\gamma} + \dfrac{\max \left\{ \frac{\kappa_1^2}{\delta}; \, \frac{\kappa_2^2}{\delta}; \, \frac{1}{2\delta} \right\}}{C_{\kappa_1, \kappa_2}^\gamma} \right) \Big( E(r) - E(T) \Big) \\
& \quad + \delta \max\left\{ 2C_1^2; \, \frac{1}{q_0} \right\}  \left( 2 C_1^2 + \frac{1}{q_0} \right)  \int_r^T \Big( u^2(1,t) + u^2_x(1,t) \Big) dt.
\end{split}
\end{align}
Finally, by choosing $\delta$ satisfying \eqref{eq:125a}, we get \eqref{eq:124}.
\end{proof}

We are now ready to state the second main result of this paper.
\begin{theorem} \label{t2}
Assume that the function $\sigma$ is (WD) or (SD). Let $u$ be a solution of  system \eqref{eq:p1}. Under assumptions \eqref{eq:0}, \eqref{eq:19} and \eqref{eq:72}, the energy $E(t)$ of system \eqref{eq:p1} decays exponentially to zero, i.e.:
\begin{gather} \label{eq:148}
E(t) \leqslant E(0) \, e^{1 - \dfrac{t}{M}}, \quad \forall t \in [M; \, +\infty),
\end{gather}
where the constant $M > 0$ is given in \eqref{eq:152a} and is independent of $(u_0, \, u_1)$.
\end{theorem}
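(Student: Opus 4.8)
The plan is to verify the two hypotheses of the theorem of Komornik (Theorem~\ref{kormonik}) for the energy $E$ and then read off the decay estimate \eqref{eq:148} directly. The monotonicity hypothesis is already available: Proposition~\ref{p2} shows that $E$ is non-increasing, so $E \colon [0,\infty)\to[0,\infty)$ is an admissible candidate. The substantive task is to produce a constant $M>0$, independent of $T$, such that $\int_t^\infty E(s)\,ds \leqslant M E(t)$ for all $t$; the exponential bound then follows verbatim from Theorem~\ref{kormonik}.

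To obtain that integral inequality I would chain the two main estimates proved above. Lemma~\ref{l2} bounds $\int_r^T E\,dt$ by $L(r)-L(T)$ plus $\varepsilon C_0$ times the boundary integral $\int_r^T\big(u^2(1,t)+u_x^2(1,t)\big)\,dt$, while Lemma~\ref{l3} controls exactly that boundary integral by a $\tilde{\delta}$-weighted copy of $\int_r^T E\,dt$ together with the boundary-energy terms $E(r)-E(T)$ and $E(T)+E(r)$, valid for every $\tilde{\delta}>0$. Substituting the second estimate into the first produces a $\tilde{\delta}$-weighted copy of $\int_r^T E\,dt$ on the right-hand side. Setting $K:=\min\{2-\iota_{\sigma,q},\,4e^{-2\tau}\}$ and choosing $\tilde{\delta}$ small relative to $K$ and $C_0$ — concretely $\tilde{\delta}=K/(4C_0)$ — lets me absorb that copy into the left-hand side, leaving the strictly positive coefficient $\tfrac{\varepsilon K}{2}$ in front of $\int_r^T E\,dt$.

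It then remains to dominate the residual right-hand side by a multiple of $E(r)$ alone. Here I would use two facts: the equivalence $\Theta_1 E \leqslant L \leqslant \Theta_2 E$ of Proposition~\ref{p3}, which gives $L(r)-L(T)\leqslant L(r)\leqslant\Theta_2 E(r)$ since $L\geqslant 0$; and the monotonicity of $E$, which yields $E(T)\leqslant E(r)$, so that every occurrence of $E(r)-E(T)$ and $E(T)+E(r)$ is dominated by a multiple of $E(r)$. Collecting the constants gives $\tfrac{\varepsilon K}{2}\int_r^T E\,dt \leqslant \big[\Theta_2 + 2\varepsilon C_0(C_2^\delta + 2C_3)\big]E(r)$ uniformly in $T$. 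Letting $T\to\infty$ — legitimate because $E\geqslant 0$ and the bound does not depend on $T$ — delivers $\int_r^\infty E\,dt \leqslant M E(r)$ with
\[
M := \frac{2\big[\Theta_2 + 2\varepsilon C_0(C_2^\delta + 2C_3)\big]}{\varepsilon K},
\]
which is the constant announced in \eqref{eq:152a}. Applying Theorem~\ref{kormonik} with this $M$ closes the proof.

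The step I expect to be the main obstacle is the absorption carried out in the second paragraph: it only works if the net coefficient of $\int_r^T E\,dt$ stays strictly positive, which couples the admissible range of $\tilde{\delta}$ to $\varepsilon$ (already constrained to be small via \eqref{eq:101}) and to $K$, and one must check that $C_2^\delta$, which itself grows like $1/\tilde{\delta}$, does not spoil the final bound once $\tilde{\delta}$ is frozen at $K/(4C_0)$. A secondary subtlety is that $K$ carries the delay-dependent factor $4e^{-2\tau}$, so extra care is needed if the resulting rate is to be made genuinely uniform in $\tau$.
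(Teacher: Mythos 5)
Your proposal is correct and follows essentially the same route as the paper: chaining Lemma~\ref{l2} with Lemma~\ref{l3}, absorbing the $\tilde{\delta}$-weighted copy of $\int_r^T E\,dt$, dominating the remainder via Proposition~\ref{p3} and the monotonicity of $E$ from Proposition~\ref{p2}, and concluding with Theorem~\ref{kormonik}. Your concrete choice $\tilde{\delta}=K/(4C_0)$ is simply a particular instance of the paper's condition \eqref{eq:151}, and the resulting constant $M=\frac{2\left[\Theta_2+2\varepsilon C_0\left(C_2^\delta+2C_3\right)\right]}{\varepsilon K}$ coincides with \eqref{eq:152a} under that choice; your closing remark about the $4e^{-2\tau}$ factor is also well taken, since $M$ as defined does depend on $\tau$.
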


\begin{proof}
Let $T>0$. By Propositions \ref{p2} and \ref{p4}, and Lemma \ref{l2}, we obtain \eqref{eq:102}. By using Lemma \ref{l3}, it follows that for all $r \in (0, T)$:
\begin{align*}
& \varepsilon \min\Big\{ 2 - \iota_{\sigma, q}; \, 4e^{-2\tau} \Big\} \int_r^T E(t) \, dt \leqslant L(r) - L(T) \\
& \quad + 2 \varepsilon C_0 \left[ \tilde{\delta} \int_r^T E(t) dt + C_2^\delta \Big( E(r) - E(T) \Big) + C_3 \Big( E(r) + E(T) \Big) \right]. 
\end{align*}
So, we have:
\begin{align} \label{eq:150}
& \varepsilon \left(  \min\Big\{ 2 - \iota_{\sigma, q}; \, 4e^{-2\tau} \Big\} - 2  \tilde{\delta} C_0 \right) \int_r^T E(t) \, dt \nonumber \\
& \quad \leqslant L(r) - L(T) + 2 \varepsilon C_0 \left[ C_2^\delta \Big( E(r) - E(T) \Big) + C_3 \Big( E(r) + E(T) \Big) \right].
\end{align}
Choosing $\tilde{\delta}$ such that:
\begin{align} \label{eq:151}
	0 < \tilde{\delta} < \frac{1}{2 C_0} \min\Big\{ 2 - \iota_{\sigma, q}; \, 4e^{-2\tau}\Big\},
\end{align}
the following estimates holds:
\begin{align*}
\int_r^T E(t) \, dt 
& \leqslant \varepsilon^{-1} \left( \min\Big\{ 2 - \iota_{\sigma, q}; \, 4e^{-2\tau} \Big\} - 2  \tilde{\delta} C_0 \right)^{-1} \Bigg[ L(r) - L(T) \\
& \qquad + 2 \varepsilon C_0 \Bigg( C_2^\delta \Big( E(r) - E(T) \Big) + C_3 \Big( E(r) + E(T) \Big) \Bigg) \Bigg].
\end{align*}
Next, using the inequality \eqref{eq:73} of Proposition \ref{p3}, we obtain:
\begin{align} \label{eq:152}
\Theta_1 E(r) - \Theta_2 E(T) \leqslant L(r) - L(T) \leqslant \Theta_2 E(r) - \Theta_1 E(T).
\end{align}
Hence, we get:
\begin{align}
& \int_r^T E(t) \, dt \leqslant M \, E(r),
\end{align}
where the constant $M$ is defined by:
\begin{gather} 
	M := \varepsilon^{-1} \left( \min\Big\{ 2 - \iota_{\sigma, q}; \, 4e^{-2\tau} \Big\} - 2  \tilde{\delta} C_0 \right)^{-1} \Big(
\Theta_2 + 2 \varepsilon C_0 C_2^\delta + 4\varepsilon C_0 C_3
 \Big). \label{eq:152a}
\end{gather}
Finally, by virtue of Theorem \ref{kormonik}, we obtain \eqref{eq:148}.
\end{proof}

 With the preceding theorem, we provide in the following remarks some additional insights into how the strength of the degeneracy and the time delay influence the final exponential decay rate.
 
\begin{remark} 
From Theorem \ref{t2}, system \eqref{eq:p1} is exponentially stable regardless of the type of degeneracy (WD or SD). This stability is uniform, as the decay rate is independent of the initial data $u_0$ and $u_1$. However, there are significant quantitative differences. The constants $C_1, \, C_2^\delta$, and $C_3$, defined respectively in \eqref{eq:108}, \eqref{eq:125} and \eqref{eq:126}, depend explicitly on the strength of the degeneracy through the parameter $\iota_\sigma$. Under the condition :
\begin{align}
	\sigma(1)(2 - \iota_\sigma) < q_0, 
\end{align}
these constants are larger in the (SD) case than in the (WD) case. Consequently, according to the expression of $M$, defined in \eqref{eq:152a}, the beam's vibrations dampen faster in the weakly degenerate (WD) case than in the strongly degenerate (SD) case.
Hence, a stronger degeneracy slows down the transmission of energy toward the dissipative boundary.
\end{remark}

\begin{remark}
While system \eqref{eq:p1} remains exponentially stable for any $\tau > 0$, the decay rate is intrinsically linked to the delay, as evidenced by the expression of the constant $M$. Specifically:
\begin{itemize}
\item when $\tau \to +\infty$, the constant $M \to \infty$, and the energy estimate \eqref{eq:148} becomes
\begin{align}
	E(t) \leqslant e E(0).
\end{align}
for large values of time $t$. Therefore, system \eqref{eq:p1} remains stable but its practical performance is significantly degraded (loss of exponential decay property);
\item when $\tau \to 0$, the constant $\bar{\delta}$ satisfies the relation $ 0 < \bar{\delta} < \dfrac{2 - \iota_{\sigma, q}}{2 C_0}$. Then, relation \eqref{eq:152a} becomes:
\begin{align}
	M = \varepsilon^{-1} \left( 2 - \iota_{\sigma, q} - 2 \bar{\delta} C_0 \right)^{-1} \left( \Theta_2 + 2 \varepsilon C_0 C_2^\delta + 4\varepsilon C_0 C_3 \right),
\end{align}
leading to a much larger and more effective decay rate. Thus, system \eqref{eq:p1} is far more efficient when the time delay $\tau$ is sufficiently small.
\end{itemize}
\end{remark}

\section{Conclusion and discussions}\label{sec5}

We have rigorously analyzed the stability of a complex Euler-Bernoulli beam model, which simultaneously incorporates a degenerate flexural rigidity, axial force and a time-delay boundary input. 
We established two main results under the assumption that the non-delayed feedback gain $\kappa_1$ dominates the delayed component, that is $\kappa_1 > |\kappa_2|$.
First, we proved the well-posedness of system \eqref{eq:p1} by recasting it as an abstract evolution problem. Using L\"umer-Phillips theorem, we showed that the associated linear operator generates a $\mathcal{C}_0$-semigroup of contractions on a dedicated weighted Hilbert state space.
Next, we established the uniform exponential stability of the closed-loop system. 
To account for the simultaneous presence of degenerate rigidity, non-uniform axial force, and boundary delay, we constructed a suitable Lyapunov functional which allows us to show that the energy of the system exponentially decays to zero, providing  a precise decay rate estimate independent of the initial data.

Building on these results, several interesting perspectives emerge for future research.
While the present study establishes exponential stability of system \eqref{eq:p1} under the condition $\kappa_1 > |\kappa_2|$, the case where the delay gain compensates or exceeds the direct damping ($\kappa_1 \leqslant |\kappa_2|$) remains an open and challenging problem. These specific situations, which may lead to instabilities or lack of exponential decay, will be the subject of future investigations using a frequency domain approach as in \cite{serge2006, serge2016}. In addition, we will conduct a numerical analysis of system \eqref{eq:p1} to precisely quantify the sensitivity of the decay rate in the weak and strong degeneracy cases, with respect to the parameters $\tau, \, \kappa_1$ and $\kappa_2$. 
Moreover, it would be of great interest to extend this analysis to system \eqref{eq:p1} in the non-degenerate case or, as in the recent works  \cite{akil2025a, akil2025b}, by incorporating a distributed singular damping term  $c(\cdot) u_t$ in the governing equation, aiming to understand how the singularity of the damping at the endpoints interacts with the degeneracy of the flexural rigidity and time-delay effects.

\nocite{*}
\bibliographystyle{unsrtsiam}
\bibliography{biblio}

\end{document}